\makeatletter \@addtoreset{equation}{section}
\newtheorem{thm}{Theorem}[section]
\newtheorem{hyp}[thm]{Hypotheses}{\rm}
{\rm}
\newtheorem{coro}[thm]{Corollary}
\newtheorem{prop}[thm]{Proposition}
\newtheorem{defi}[thm]{Definition}
\newtheorem{rmk}[thm]{Remark}{\rm}
\newtheorem{example}[thm]{Example}
\newcommand{\R}{{\mathbb R}}
\newcommand{\N}{{\mathbb N}}
\newcommand{\Rd}{\mathbb R^d}
\newcommand{\Rm}{\mathbb R^m}
\newcommand{\bd}{\begin{defi}}
\newcommand{\ed}{\end{defi}}
\newcommand{\be}{\begin{equation}}
\newcommand{\ee}{\end{equation}}
\newcommand{\barr}{\begin{array}}
\newcommand{\earr}{\end{array}}
\newcommand{\bmn}{\begin{eqnarray}}
\newcommand{\emn}{\end{eqnarray}}
\newcommand{\bln}{\begin{subequations}}
\newcommand{\eln}{\end{subequations}}
\newcommand{\ba}{\begin{align}}
\newcommand{\ea}{\end{align}}
\newcommand{\banm}{\begin{align*}}
\newcommand{\eanm}{\end{align*}}
\newcommand{\one}{\mbox{$1\!\!\!\;\mathrm{l}$}}
\newcommand{\A}{\mathcal A}
\newcommand{\vv}{{\bf v}}
\newcommand{\f}{{\bf f}}
\newcommand{\g}{{\bf g}}
\newcommand{\h}{{\bf h}}
\newcommand{\G}{{\bf G}}
\newcommand{\uu}{{\bf u}}
\begin{document}

\title[$L^p$ estimates for parabolic systems]{$L^p$-estimates for parabolic systems with unbounded coefficients
coupled at zero and first order}
\author[L. Angiuli, L. Lorenzi, D. Pallara]{Luciana Angiuli, Luca Lorenzi and Diego Pallara}
\address{L.A., D.P.: Dipartimento di Matematica e Fisica ``Ennio De Giorgi'', Universit\`a del Salento, Via per Arnesano, I-73100 LECCE (Italy)}
\address{L.L.: Dipartimento di Matematica e Informatica, Universit\`a degli Studi di Parma, Parco Area delle Scienze 53/A, I-43124 PARMA (Italy)}
\email{luciana.angiuli@unisalento.it}
\email{luca.lorenzi@unipr.it}
\email{diego.pallara@unisalento.it}

\date{}

\keywords{Parabolic systems, unbounded coefficients, $L^p$-estimates, pointwise gradient estimates}
\subjclass[2000]{35K45, 47D06}

\begin{abstract}
We consider a class of nonautonomous parabolic first-order coupled systems in the Lebesgue space $L^p(\Rd; \Rm)$,
$(d,m \ge 1)$ with $p\in [1,+\infty)$. Sufficient conditions for the associated evolution operator
${\bf G}(t,s)$ in $C_b(\Rd;\R^m)$ to extend to a strongly continuous operator in $L^p(\Rd; \Rm)$ are given.
Some $L^p$-$L^q$ estimates are also established together with $L^p$ gradient estimates.
\end{abstract}

\maketitle

\section{Introduction}
Second order elliptic and parabolic operators with unbounded coefficients have received a great deal of attention because
of their analytical interest as well as their applications to stochastic analysis, both in the autonomous and, more recently,
in the nonautonomous case. Due to the applications in Stochastics, much of the work has been
done in spaces of continuous and bounded functions and in the $L^p$-spaces with respect to {\em the invariant measure}, in the
autonomous, and {\em evolution systems of measures}, in the nonautonomous case. The
existence of a unique classical solution for homogeneous parabolic Cauchy problems associated with
operators with unbounded coefficients in spaces of continuous and bounded functions, or equivalently the existence
of a {\em semigroup} $T(t)$ or an {\em evolution operator} $G(t,s)$, respectively, can be shown under mild assumptions
on the growth of the coefficients. Let us refer the reader to \cite{MetPalWac1,BerLorbook,LorSurvey} and their bibliographies
for more information.

On the other hand, the analysis in the $L^p$ setting with respect to the Lebesgue measure has an independent
analytical interest  and it turns out to be much more difficult than the analysis in the space
of continuous and bounded functions or in $L^p$-spaces with respect to the
invariant measure (resp. evolution system of measures). Even in the autonomous case, the Cauchy problem may be not well posed
in $L^p(\Rd, dx)$ if the coefficients are unbounded, unless they satisfy very restrictive assumptions.
For instance, in the $1$-dimensional case very simple operators, such as
$D^2-|x|^\varepsilon x D$, with $\varepsilon>0$, do not generate any semigroup in $L^p(\R, dx)$ and in this
situation, the lack of the potential term plays a crucial role, see also \cite{AreMetPal06Scr} for further examples and comments.

Since nowadays many of the results obtained concern the single equations, the aim of this paper is the
study of parabolic systems with unbounded coefficients, coupled in the zero and first order terms, in the Lebesgue
space $L^p(\Rd,\R^m)$. We consider the Cauchy problem
\begin{equation}
\left\{
\begin{array}{lll}
D_t\uu(t,x)=(\boldsymbol{\mathcal A}(t)\uu)(t,x),\quad\quad & t>s\in I, &x\in\R^d,\\[1mm]
\uu(s,x)=\f(x), &&x\in\R^d
\end{array}
\right.
\label{eq:cauchy_problem_system}
\end{equation}
where $I$ is an open right-halfline or the whole $\R$ and the elliptic operators
\begin{equation}
\boldsymbol{\mathcal A}{\bf v}=\sum_{i,j=1}^dD_i(q_{ij}D_{j}{\bf v})+\sum_{i=1}^d B_iD_i{\bf v}+C{\bf v}
\label{operat-A}
\end{equation}
have unbounded coefficients  $q_{ij}:I\times\Rd\to \R$ and $B_i, C: I\times\Rd \to \R^{m^2}$ ($m \ge 1$).

Second order elliptic and parabolic systems have been already studied in the simplest case of {\em zero order coupling}, i.e., when $B_i=b_iI_m$
(see \cite{HieLorPruRhaSch09Glo,DelLor11OnA}).
The more general frame of {\em first order coupling}, i.e., uncoupled diffusion and coupled drift and potential, has been very recently
studied in the space of continuous and bounded functions in \cite{AddAngLor15Cou},
where the existence of an {\em evolution operator} ${\bf G}(t,s)$ associated with
$\boldsymbol{\mathcal A}(t)$ in $C_b(\Rd; \Rm)$ has been shown. Here, we take advantage of such construction and
of a pointwise estimate shown in \cite{AddAngLor15Cou} to start our investigation on the properties of
 ${\bf G}(t,s)$ in the $L^p$ context. We refer to \cite{LunardiBook,CL} for the abstract theory of evolution operators.

We assume that the coefficients are regular enough, namely locally $C^{\alpha/2, \alpha}$, for some $\alpha \in (0,1)$,
together with the first order spatial derivatives of $q_{ij}$ and of the entries of $B_i$, for any $i,j=1, \ldots, d$, and that
the matrix $Q(t,x)=[q_{ij}(t,x)]_{i,j=1, \ldots,d}$ is uniformly positive definite, see Hypotheses \ref{hyp_base}.

The $L^p$ analysis is carried out under two different sets of assumptions, Hypotheses \ref{uni1} and \ref{uni2}, which we compare in
Remark \ref{hypocomparison}. The two approaches give slightly different results.
Indeed, under Hypotheses \ref{uni1} we deal directly with the vectorial problem. Using the pointwise estimate
proved in \cite{AddAngLor15Cou} (and recalled in the Appendix),
an interpolation argument and requiring
a balance between the growth of the potential matrix $C$ and the derivative of the drift matrices $B_i$ ($i=1, \ldots,d$),
we prove that the evolution operator ${\bf G}(t,s)$ extends to a bounded and strongly continuous
operator in $L^p(\Rd; \Rm)$ for any $p \in [1,+\infty)$.

On the other hand, when Hypotheses \ref{uni2} are satisfied, we estimate
$|{\bf G}(t,s)\f|^p$ in terms of $G(t,s)|\f|^p$ for any $t>s \in I$, $p \in [p_0, +\infty)$ and some $p_0>1$.
Here, $G(t,s)$ is the evolution operator which governs an auxiliary scalar problem.  As a consequence of this comparison result,
the boundedness of ${\bf G}(t,s)$ in $\mathcal{L}(L^p(\Rd;\Rm))$ for $p\in [p_0,+\infty)$ can be obtained as a byproduct
of the boundedness of $G(t,s)$ in $\mathcal{L}(L^1(\Rd))$. Sufficient conditions in order that $G(t,s)$ is bounded in $L^p$
for any $p \in [1,+\infty)$ can be found in \cite{AngLor10Com}. Notice however that slightly strengthening Hypothesis \ref{uni2}(ii)
we can deal with the whole scale of $1<p<\infty$ rather than $p\geq p_0$, see Remark \ref{alpha}.

Going further, we find conditions for the hypercontractivity of ${\bf G}(t,s)$.
More precisely, under suitable assumptions, we prove that
\begin{equation}\label{iper_intro}
\|{\bf G}(t,s)\f\|_{L^q(\Rd;\Rm)}\le c\|\f\|_{L^p(\Rd;\Rm)},
\end{equation}
for any $t\in (s, T]$, $T>s \in I$, $\f \in L^p(\Rd;\Rm)$, $q\geq p$ and some positive constant $c$ depending on $p,q,s$ and $T$.
Actually, whenever Hypotheses \ref{uni1} are satisfied,
under the same assumptions which guarantee that $L^p(\Rd,\Rm)$ is preserved by the action of ${\bf G}(t,s)$, we prove \eqref{iper_intro}
for any $2 \le p \le q$. Then, arguing by duality we establish \eqref{iper_intro} also when $1 \le p \le q \le 2$.
Applying this hypercontractivity result to the scalar evolution operator $G(t,s)$ and using
the pointwise estimate of $|{\bf G}(t,s)\f|^p$ in terms of $G(t,s)|\f|^p$, we provide conditions for
\eqref{iper_intro} to hold for $p_0 \le p \le q$, when Hypotheses \ref{uni2} are satisfied.

The hypercontractivity estimate \eqref{iper_intro}, in this generality,  seems to be new also in the autonomous
scalar case. Some $L^p$-$L^q$
estimates have been recently proved in \cite{IoMeSoSp14LpLq} for a special class of homogeneous operators with unbounded diffusion.

Next, we prove some pointwise estimates for the spatial derivatives of ${\bf G}(t,s)\f$.
Under additional assumptions, which are essentially growth conditions on the coefficients of the operator
$\boldsymbol{\mathcal A}(t)$ and their derivatives, we show that there exist positive constants $c_1, c_2$ such that
\begin{equation}\label{stima_grad1_intro}
|D_x {\bf G}(t,s)\f|^p \le c_1 G(t,s)(|\f|^p+|D\f|^p)
\end{equation}
and, under more restrictive conditions, that
\begin{equation}\label{lp-w1p-intro}
|D_x {\bf G}(t,s)\f|^p\le c_2 (t-s)^{-\frac{p}{2}}G(t,s)|\f|^p,
\end{equation}
for any $t\in (s, T]$, $T>s \in I$, $\f \in C^1_c(\Rd;\Rm)$ and $p \in [p_1,+\infty) $ for some $p_1>1$.

Now, if the scalar evolution operator $G(t,s)$ preserves $L^1(\Rd)$, estimates \eqref{stima_grad1_intro}
and \eqref{lp-w1p-intro} yield that the evolution operator ${\bf G}(t,s)$ belongs to
$\mathcal{L}(W^{1,p}(\Rd;\Rm))$ and to $\mathcal{L}(L^p(\Rd;\Rm),W^{1,p}(\Rd;\Rm))$, respectively.
As a consequence of this fact, we show that ${\bf G}(t,s)$ is bounded from $W^{\theta_1,p}(\Rd; \Rm)$ into
$W^{\theta_2,p}(\Rd;\Rm)$ for any $0 \le \theta_1 \le \theta_2 \le 1$ and any $p \ge p_1$.

We believe that estimates \eqref{stima_grad1_intro} and \eqref{lp-w1p-intro} could represent a helpful tool to study
the evolution operator ${\bf G}(t,s)$ in $L^p$-spaces with respect to {\em a natural extension to the vector case of
evolution systems of measures}, whose definition and analysis is deferred to a future paper.
Indeed, already in the scalar case, (see \cite{AngLorLun12Asy, AngLor12OnI}), pointwise gradient estimates
have been a key tool to study the asymptotic behaviour of
the evolution operator associated with the problem and in establishing some summability improving results for such operator
in the $L^p$ spaces with respect the tight time dependent family of invariant measures.

The last section of the paper is devoted to exhibit some classes of operators which satisfy our assumptions.

\subsection*{Acknowledgements} The authors have been partially supported by the PRIN 2010 MIUR project ``Problemi differenziali di evoluzione: 
approcci deterministici e stocastici e loro interazioni" and are members of G.N.A.M.P.A. of the Italian Istituto Nazionale di Alta Matematica 
(INdAM). L.A. and L.L. have also been supported by ``Progetto GNAMPA 2014: Equazioni ellittiche e paraboliche''. 

\subsection*{Notations}
Functions with values in $\R^m$ are displayed in bold style. Given a function $\f$ (resp. a sequence
$(\f_n)$) as above, we denote by $f_i$ (resp. $f_{n,i}$) its $i$-th component (resp. the $i$-th component
of the function $\f_n$).
By $B_b(\Rd;\Rm)$ we denote the set of all the bounded Borel measurable functions $\f:\Rd\to\Rm$.
For any $k\ge 0$, $C^k_b(\R^d;\R^m)$ is the space of all the functions
whose components belong to $C^k_b(\R^d)$, where the notation $C^k(\R^d)$ ($k\ge 0$) is standard and we use the subscript ``$c$'' and ``$b$''
for spaces of functions with compact support and bounded, respectively.
Similarly, when $k\in (0,1)$, we use the subscript ``loc'' to denote the space of all $f\in C(\Rd)$
which are H\"older continuous in any compact set of $\Rd$.
We assume that the reader is familiar also with the parabolic spaces $C^{\alpha/2,\alpha}(I\times \Rd)$
($\alpha\in (0,1)$) and $C^{1,2}(I\times \Rd)$, and we use the subscript ``loc'' with the same meaning as above.

The Euclidean inner product of the vectors $x,y\in\R^d$ is denoted by $\langle x,y\rangle$.
For any square matrix $M$, we denote by $M_{ij}$, ${\rm Tr}(M)$ and $M^*$ the $ij$-th
element of the matrix $M$, the trace of $M$ and the matrix transposed to $M$, respectively.
Finally, $\lambda_M$ and $\Lambda_M$ denote the minimum and the maximum eigenvalue of
the (symmetric) matrix $M$. For any $k \in \N$, by $I_k$ we denote the identity matrix of size $k$.
Square matrices of size $m$ are thought as elements of $\R^{m^2}$.

By $\chi_A$, $\one$ and ${\bf e}_j$ we denote the characteristic function of the set
$A\subset\R^d$, the function which is identically equal to 1 in $\R^d$ and the $j$-th vector of the Euclidean basis of $\R^m$.
Finally, the Euclidean open ball with centre $x_0$ and radius $R>0$ and its closure are
denoted by $B_{R}(x_0)$ and $\overline B_R(x_0)$; when $x_0=0$ we simply
 write $B_R$ and $\overline B_R$.

For any interval $J\subset \R$ we denote by $\Sigma_J$ the set $\{(t,s)\in J\times J:\;\, t>s\}$.

\section{Preliminary results}

Let $I$ be an open right-halfline (possibly $I=\R$) and $\{\boldsymbol{\mathcal A}(t)\}_{t \in I}$
be the family  of second order uniformly elliptic operators defined in \eqref{operat-A}.
In this paper we study the Cauchy problem  \eqref{eq:cauchy_problem_system}
when $\f \in L^p(\Rd;\Rm)$ and $s\in I$, under the following standing assumptions.

\noindent
\begin{hyp}
\label{hyp_base}
\begin{enumerate}[\rm (i)]
\item
The matrices $Q=[q_{ij}]_{i,j=1, \ldots,d}$, $B_i$ $(i=1,\ldots,d)$ and $C$ are symmetric. Further,
$q_{ij}, (B_i)_{lk}\in C^{\alpha/2,1+\alpha}_{\rm loc}(I\times \Rd)$ and $C_{lk}\in C^{\alpha/2,\alpha}_{\rm loc}(I\times\R^d)$
for any $i,j=1,\ldots,d$ and $l,k =1,\ldots,m$;
\item
the matrix $Q$ is uniformly elliptic, i.e., $\nu_0:=\inf_{I\times \Rd}\lambda_Q (t,x)>0$ where
\begin{equation*}
\lambda_Q(t,x):=\min\{\langle Q(t,x)\xi,\xi\rangle:\, \xi \in \Rd,\ |\xi|=1\},\qquad\;\,t\in I, \;\, x\in\R^d
\end{equation*}
is the minimum eigenvalue of $Q(t,x)$.
\end{enumerate}
\end{hyp}
Besides Hypotheses \ref{hyp_base} we consider one of the following two sets of assumptions.

\noindent
\begin{hyp}\label{uni1}
\begin{enumerate}[\rm(i)]
\item
The function ${\mathcal K}_\eta: I\times\R^d\to \R$, defined by
\begin{equation}
{\mathcal K}_\eta=\sum_{i,j=1}^d(Q^{-1})_{ij}[\langle B_i\eta,\eta\rangle\langle
B_j\eta,\eta\rangle-\langle B_i\eta,B_j\eta\rangle]-4\langle C\eta,\eta\rangle,
\label{eq:positive condition}
\end{equation}
is nonnegative in $I \times \Rd$, for any $\eta \in \partial B_1$;
\item
for any bounded interval $J\subset I$ there exist a constant $\lambda_J$ and a positive ({\em Lyapunov}) function
 $\varphi_J\in C^2(\R^d)$, blowing up as $|x|\to +\infty$, such that
\begin{equation*}
\sup_{\eta \in \partial B_1}\sup_{(t,x)\in J\times\R^d}(\A_{\eta}(t)\varphi_J)(x)-\lambda_J\varphi_J(x))<+\infty,
\end{equation*}
where
\begin{equation}\label{defAeta}
\A_{\eta}={\rm div}(Q D_x)+\langle b_{\eta},D_x\rangle,\qquad
(b_{\eta})_i=\langle B_i\eta,\eta\rangle.
\end{equation}
\end{enumerate}
\end{hyp}
Condition \ref{uni1}(i) is already used by \cite{KreMaz12Max} in the case of bounded coefficients.

\noindent
\begin{hyp}\label{uni2}
\begin{enumerate}[\rm (i)]
\item
There exist functions $b_i:I\times \Rd \to \R$ and $\tilde{B}_i:I \times \R^d\to \R^{m^2}$ such that
$B_i:=b_iI_m+\tilde B_i$, for any $i=1, \ldots,d$, $\sigma>0$, and a function $\xi:I \to (0,+\infty)$ such that
\[
|(\tilde{B}_{i})_{jk}(t,x)|\leq \xi(t)\lambda_Q^{\sigma}(t,x), \qquad\;\, (t,x)\in I\times \Rd,
\]
for any $j,k=1,\ldots,m$ and $i=1,\ldots,d$;
\item
for any bounded interval $J\subset I$ there exists $\beta \geq 1/4$ such that
\begin{equation}\label{HpJ}
H_{\beta,J}:=\sup_{J\times\Rd}(\Lambda_C+\beta dm^2\xi^2\lambda_Q^{2\sigma-1})<+\infty;
\end{equation}
\item
for any bounded interval $J\subset I$ there exist $\lambda_J>0$ and a
positive function $\varphi_J\in C^2(\R^d)$ blowing up as $|x|\to +\infty$ such
that $\sup_{J\times\R^d}(\A\varphi_J-\lambda_J\varphi_J)<+\infty$,
where
\begin{equation}\label{defA}
\A={\rm div}(Q D_x)+\langle b,D_x\rangle,\qquad b=(b_1,\ldots,b_m).
\end{equation}
\end{enumerate}
\end{hyp}

\begin{rmk}\label{luftansa}{\rm Hypothesis $\ref{uni1}$(i)
can be replaced with the weaker condition
\begin{equation}\label{weak}
\inf_{\eta\in \partial B_1}\inf_{J \times \Rd}\mathcal K_\eta >-\infty
\end{equation}
for any bounded interval $J \subset I$.
Indeed, in this latter case, for any bounded interval $J \subset I$ there exists a positive constant $c_J$ such that
$\mathcal K_\eta\geq -c_J$  in $J\times\R^d$ for any $\eta \in\partial B_1$. Let us notice that
 $\uu$ is a classical solution of the Cauchy problem \eqref{eq:cauchy_problem_system} if and only if the function $\vv$,
defined by $\vv(t,x):=e^{-c_J(t-s)/4}\uu(t,x)$ for any $(t,x)\in (s,+\infty)\times \Rd$, is a classical solution of the problem
\begin{equation}\label{weak_pb}
\left\{
\begin{array}{ll}
D_t\vv(t,x)=\displaystyle\left (\boldsymbol{\mathcal A}(t)-\frac{c_J}{4}\right )\vv(t,x), \qquad\,&(t,x)\in (s,+\infty)\times \Rd \\[2mm]
\vv(s,x)=\f(x), \qquad &x \in \Rd.
\end{array}
\right.
\end{equation}
The elliptic operator in problem \eqref{weak_pb} satisfies Hypothesis $\ref{uni1}$(i) and, clearly, the uniqueness of $\vv$ is equivalent
to the uniqueness of $\uu$.}
\end{rmk}

\begin{rmk}\label{hypocomparison} {\rm
 A comparison between Hypotheses \ref{uni1} and \ref{uni2} is in order. First of all, notice that writing the
 matrices $B_i$ as in \ref{uni2}(i) the function ${\mathcal K}_\eta$ depends only upon $\tilde{B}_i$, because the
 diagonal part cancels. The two sets of hypotheses are independent in
 general: \ref{uni2}(i) and (ii) imply \ref{uni1}(i), whereas \ref{uni1}(ii) is stronger
 than \ref{uni2}(iii). Indeed, assuming \ref{uni2}(i) it is easily seen that
 \[
  \sum_{i,j=1}^d(Q^{-1})_{ij}[\langle B_i\eta,\eta\rangle\langle B_j\eta,\eta\rangle-\langle B_i\eta,B_j\eta\rangle]
 \]
is negative and of order $\lambda_Q^{2\sigma-1}$. This fact together with \ref{uni2}(ii) implies \ref{uni1}(i) (taking
Remark \ref{luftansa} into account). On the other hand, assuming \ref{uni2}(i), the function ${\mathcal K}_\eta$
can be of order less than $\lambda_Q^{1-2\sigma}$. For instance, assume $d=m=2$, $Q={\rm diag}(\lambda_Q,\Lambda_Q)$,
$B_1=b_1I_2$ diagonal and $\tilde{B}_2\neq 0$. Then, we have
\[
 {\mathcal K}_\eta = \Lambda_Q^{-1}(\langle\tilde{B}_2\eta,\eta\rangle^2-|\tilde{B}_2\eta|^2)-4\langle C\eta,\eta\rangle
 \geq 0 \quad {\rm if} \quad \Lambda_C+2\xi^2 \lambda_Q^{2\sigma}\Lambda_Q^{-1}<+\infty ,
\]
which is weaker than \eqref{HpJ} if $\lambda_Q=o(\Lambda_Q)$.

Concerning \ref{uni1}(ii) and \ref{uni2}(iii), the latter requires the existence of a Lyapunov function for
{\em one} decomposition of each drift matrix, while the former requires the existence of a Lyapunov function for {\em any}
decomposition $B_i = b_\eta I_m + \tilde{B}_{\eta,i}$, $\eta \in \partial B_1$.
}\end{rmk}

We start by recalling some known results used in the sequel and proved in \cite{AddAngLor15Cou}. The evolution operator on $C_b(\Rd;\Rm)$
which gives a solution of problem \eqref{eq:cauchy_problem_system}
is obtained as the limit of the sequence of the evolution operators related to the
following Cauchy-Dirichlet problem in $I\times B_n$:
\begin{equation}\label{prob_approx}
\left\{
\begin{array}{lll}
D_t\uu_n(t,x)=(\boldsymbol{\mathcal A}(t)\uu_n)(t,x), \qquad\quad& t>s, \,x\in B_n,\\
\uu_n(t,x)={\bf 0}, & t>s,\,x\in\partial B_n,\\
\uu_n(s,x)=\f(x), & x\in\overline{B_n}.
\end{array}
\right.
\end{equation}
We shall also be concerned with the Cauchy-Neumann problem in $I\times B_n$:
\begin{equation}\label{prob_approx_Neu}
\left\{
\begin{array}{lll}
D_t\uu_n(t,x)=(\boldsymbol{\mathcal A}(t)\uu_n)(t,x), \qquad\quad& t>s, \,x\in B_n,\\[1mm]
\displaystyle\frac{\partial\uu_n}{\partial\nu}(t,x)={ \bf 0}, & t>s,\,x\in\partial B_n,\\[1mm]
\uu_n(s,x)=\f(x), & x\in\overline{B_n},
\end{array}
\right.
\end{equation}
where $\nu$ denotes the unit exterior normal vector to $\partial B_n$.
Throughout the paper, we denote by $\G_n^{\mathcal D}(t,s)$ and $\G_n^{\mathcal N}(t,s)$ the
Dirichlet and Neumann evolution operators associated with problems \eqref{prob_approx}, \eqref{prob_approx_Neu} in $C_b(B_n;\Rm)$.

\begin{prop}\label{recall}
Under Hypotheses $\ref{hyp_base}$ and $\ref{uni1}$ $($resp. $\ref{uni2})$, for any $\f \in C_b(\Rd;\Rm)$, problem
\eqref{eq:cauchy_problem_system}
admits a unique classical solution $\uu$ which is bounded in the strip $[s,T]\times\R^d$ for any $T>s \in I$.
Setting ${\bf G}(t,s)\f:= \uu(t, \cdot)$ for any $t>s$ and $\f \in C_b(\Rd;\Rm)$, 
 $\G(t,s)$ is a bounded linear operator in $C_b(\Rd;\Rm)$ and
\begin{equation}
\|\G(t,s)\f\|_{\infty}\leq \gamma(t-s)\|\f\|_{\infty}, \qquad\;\,t\in (s,T),
\label{eq:norma_condition_classical solution}
\end{equation}
where $\gamma(r)=1$ $($resp.\footnote{Here $H_{1/4,[s,T]}$ is the constant in \eqref{HpJ}.} $\gamma(r)=e^{H_{1/4,[s,T]}r})$ 
for any $r>0$. Moreover, for any $s \in I$ and $\f\in C_b(\Rd;\Rm)$, both $\G_n^{\mathcal N}(\cdot,s)\f$ and 
$\G_n^{\mathcal D}(\cdot,s)\f$ converge to $\G(\cdot,s)\f$ in $C_{\rm loc}^{1,2}((s,+\infty)\times \Rd;\R^m)$.
\end{prop}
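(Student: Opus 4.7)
The plan is to construct $\G(t,s)\f$ as the $C^{1,2}_{\rm loc}$-limit of the approximating Dirichlet and Neumann evolution operators and to handle uniqueness via the Lyapunov function supplied by Hypothesis \ref{uni1}(ii) or \ref{uni2}(iii). For each fixed $n$, on the compact cylinder $[s,T]\times\overline{B_n}$ the coefficients are bounded and H\"older continuous by Hypothesis \ref{hyp_base}, so classical parabolic theory for second order linear systems on smooth bounded domains (Solonnikov--Ladyzhenskaya) yields, for every $\f \in C_b(\Rd;\Rm)$, unique classical solutions $\uu_n^{\mathcal D}$ and $\uu_n^{\mathcal N}$ of \eqref{prob_approx} and \eqref{prob_approx_Neu}, belonging to $C^{1+\alpha/2,2+\alpha}((s,T]\times \overline B_n;\Rm)$.

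The crucial step is a uniform sup-norm bound on the approximants, for which I would invoke the pointwise identity recalled in the Appendix from \cite{AddAngLor15Cou}: on the set where $\uu_n \neq \mathbf 0$ the scalar function $w_n:=|\uu_n|^2$ satisfies
\[
 D_t w_n - \A_{\eta_n} w_n \le -\mathcal K_{\eta_n}(t,x)\, w_n,\qquad \eta_n := \uu_n/|\uu_n|,
\]
with $\A_\eta$ and $\mathcal K_\eta$ as in \eqref{defAeta} and \eqref{eq:positive condition}. Under Hypothesis \ref{uni1}(i) (or its weaker form \eqref{weak}, after the reduction in Remark \ref{luftansa}), the right-hand side is nonpositive, and the scalar parabolic maximum principle on $B_n$ (with either boundary condition) yields $\|\uu_n(t,\cdot)\|_\infty \le \|\f\|_\infty$ on $[s,T]$. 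Under Hypothesis \ref{uni2}, the splitting $B_i = b_iI_m + \tilde B_i$ combined with (i)--(ii) produces $-\mathcal K_{\eta_n} \le 4 H_{1/4,[s,T]}$, and a Gr\"onwall step gives $\|\uu_n(t,\cdot)\|_\infty \le e^{H_{1/4,[s,T]}(t-s)}\|\f\|_\infty$, matching \eqref{eq:norma_condition_classical solution} in each case.

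With this uniform bound, interior parabolic Schauder estimates together with Hypothesis \ref{hyp_base}(i) provide uniform $C^{1+\alpha/2,2+\alpha}$-bounds on $\uu_n^{\mathcal D}$ and $\uu_n^{\mathcal N}$ on every compact subset of $(s,+\infty)\times \Rd$. A diagonal Arzel\`a--Ascoli extraction then produces subsequences converging in $C^{1,2}_{\rm loc}((s,+\infty)\times \Rd;\Rm)$ to bounded classical solutions of \eqref{eq:cauchy_problem_system}, the sup-norm bound passing to the limit. Continuity at $t=s$ is obtained by a standard local barrier argument exploiting that $\uu_n(s,\cdot)\equiv \f$ on $\overline B_n$.

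Finally, uniqueness of the bounded classical solution identifies the two subsequential limits and upgrades the subsequential convergence to convergence of the full sequences. Let $\vv$ be a bounded classical solution corresponding to $\f = \mathbf 0$; then $w=|\vv|^2$ satisfies $D_t w - \A_\eta w \le c_J\, w$ on $[s,T]\times \Rd$, with $c_J=0$ under Hypothesis \ref{uni1} and $c_J = 4 H_{1/4,[s,T]}$ under Hypothesis \ref{uni2}. Comparing $w$ with $z_\varepsilon(t,x):=\varepsilon e^{(\lambda_J+c_J)(t-s)}\varphi_J(x)$ on large cylinders and using that $\varphi_J$ blows up at infinity while $w$ is bounded, the scalar maximum principle yields $w \le z_\varepsilon$ everywhere; letting $\varepsilon \to 0^+$ forces $\vv \equiv \mathbf 0$. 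The main obstacle, for me as for the authors, is justifying the pointwise differential inequality for $w_n$ across the (possibly nonempty) zero set of $\uu_n$ and correctly tracking the matrix algebra that turns $D_t|\uu_n|^2 - \A_{\eta_n}|\uu_n|^2$ into $-\mathcal K_{\eta_n}|\uu_n|^2$ modulo a manifestly nonpositive remainder; this is precisely the algebraic identity from \cite{AddAngLor15Cou} that the statement is meant to import.
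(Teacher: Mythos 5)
This proposition is explicitly \emph{recalled} from \cite{AddAngLor15Cou}; the paper does not reprove it, and the only piece re-derived here is the sup-norm bound, in Proposition \ref{prop-appendix} of the Appendix, via a genuinely different route from yours. There, the authors work directly with $\uu=\G(\cdot,s)\f$ on all of $\R^d$, subtract a small multiple $\varphi/n$ of the Lyapunov function so that $v_n:=e^{-\lambda(t-s)}|\uu|^2-e^{-(\lambda-2h_0)(t-s)}\|\f\|_\infty^2-\varphi/n$ attains an interior maximum, and then optimize the quadratic form $V(\xi^1,\dots,\xi^d,\zeta)$ over the constraint $\langle\xi^j,\zeta\rangle=(2n)^{-1}D_j\tilde\varphi$ by Lagrange multipliers. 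Because the argument is localized at a maximum point where $v_n>0$, there $\uu\neq\mathbf 0$ and the direction $\zeta/|\zeta|$ is automatically well-defined, so the zero set of $\uu$ never enters. Your route instead tries to write a global pointwise parabolic inequality for $w_n=|\uu_n|^2$ with the direction-dependent drift $b_{\eta_n}$, and you correctly flag that this is exactly the step that must be handled across $\{\uu_n=\mathbf 0\}$; the paper's max-point formulation buys the author the freedom of never having to do so.

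Beyond the strategic difference, there is a quantitative error in the differential inequality you propose. A careful computation with the projection $\pi=I_m-\eta\otimes\eta$ gives, on $\{\uu_n\neq\mathbf 0\}$,
\begin{align*}
D_t w_n-\A_{\eta_n}w_n
&=2\sum_{i=1}^d|\uu_n|\,\langle\pi B_i\eta_n,\pi D_i\uu_n\rangle
 -2\sum_{i,j=1}^dq_{ij}\langle D_i\uu_n,D_j\uu_n\rangle
 +2\langle C\uu_n,\uu_n\rangle,
\end{align*}
and maximizing the first two terms over $\pi D_i\uu_n$ yields the bound $\tfrac12|\uu_n|^2\sum_{ij}(Q^{-1})_{ij}\langle\pi B_i\eta_n,\pi B_j\eta_n\rangle$, \emph{not} the double of it. Consequently the correct inequality is
\[
D_t w_n-\A_{\eta_n}w_n\le -\tfrac12\,\mathcal K_{\eta_n}\,w_n,
\]
rather than your $-\mathcal K_{\eta_n}w_n$. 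This factor of $1/2$ matters: under Hypotheses \ref{uni2}, it is exactly what turns $-\mathcal K_{\eta_n}\le 4H_{1/4,J}$ into the Gr\"onwall bound $w_n\le e^{2H_{1/4,J}(t-s)}\|\f\|_\infty^2$, hence $\|\uu_n\|_\infty\le e^{H_{1/4,J}(t-s)}\|\f\|_\infty$, which is the rate claimed in \eqref{eq:norma_condition_classical solution}; with your stated inequality the same chain would produce $e^{2H_{1/4,J}(t-s)}$ in the final estimate, so the constant in the statement would not follow. (One can also read the correct factor off the appendix proof, where the decisive term is written as $\tfrac12|\zeta|^2\mathcal K(t_0,x_0,|\zeta|^{-1}\zeta)$.) The remainder of your plan --- interior Schauder estimates plus Arzel\`a--Ascoli for the $C^{1,2}_{\rm loc}$ convergence, and the Lyapunov-function comparison for uniqueness --- is the standard scheme and is sound.
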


The uniqueness of the solution of the problem \eqref{eq:cauchy_problem_system} shows
that the family $\{{\bf G}(t,s)\}_{t\geq s\in I}$ is an evolution operator in $C_b(\Rd;\R^m)$.

\begin{rmk}\label{alpha}{\rm
Notice that working in $L^p$ is allowed provided that Hypothesis \ref{uni2}(ii) holds for some $\beta\geq [4(p-1)]^{-1}$, as we shall see in
the proof of Proposition \ref{stimapuntLp} below. We are supposing $\beta\geq 1/4$ in order to encompass the case $p=2$: indeed,  
estimate \eqref{eq:norma_condition_classical solution} has been obtained  as consequence of a
pointwise estimate for $|\uu|^2$ in terms of the solution of a suitable scalar problem. 

Moreover, we point out that if \eqref{HpJ} holds with $\lambda_Q^\alpha$ in place of $\lambda_Q^{2\sigma -1}$
for some $\alpha<2\sigma-1$, then every $\beta>0$ is allowed and we can extend our results to the whole scale
of $p>1$. We shall not mention this extension anymore.
}\end{rmk}

Since in this paper we are interested in studying the evolution operator $\G(t,s)$ in the $L^p(\Rd;\Rm)$ setting
under Hypotheses $\ref{uni2}$, we extend the just mentioned pointwise estimate to $|\uu|^p$ for any $p\in [1+\frac{1}{4\beta},+\infty)$.

\begin{prop}\label{stimapuntLp}
Assume that Hypotheses $\ref{uni2}$ hold true; then, for every bounded interval $J \subset I$ and $p\geq 1+\frac{1}{4\beta}$,
there exists a positive constant $K_{J}$ such that
\begin{equation}\label{pointwise}
|(\G(t,s)\f)(x)|^p\leq e^{pK_{J}(t-s)} (G(t,s)|\f|^p)(x),
\end{equation}
for any $(t,s)\in \Sigma_J$, $x \in \Rd$ and $\f \in C_b(\Rd;\R^m)$, where $G(t,s)$
 denotes the evolution operator in $C_b(\Rd)$ associated with the operator $\mathcal A$ defined in \eqref{defA}. 
 Here, $K_{J}=H_{1/4,J}$ if $p \ge 2$ whereas $K_{J}= H_{\beta,J}$ if $p \in [1+\frac{1}{4\beta},2)$.
\end{prop}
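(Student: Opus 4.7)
The plan is to extend to $(|\uu|^2+\varepsilon^2)^{p/2}$ the Bernstein-type computation that yielded \eqref{eq:norma_condition_classical solution} in the case $p=2$, and then to compare with the scalar parabolic problem governed by $\mathcal A$ via the maximum principle on balls. Fix $\f\in C_b(\Rd;\Rm)$, a bounded interval $J=[s,T]\subset I$, set $\uu_n(t,\cdot):=\G_n^{\mathcal N}(t,s)\f$, and for $\varepsilon>0$ define $w_{n,\varepsilon}:=(|\uu_n|^2+\varepsilon^2)^{p/2}$, which is $C^{1,2}$ up to $\partial B_n$. The Neumann condition on $\uu_n$ gives $\partial_\nu w_{n,\varepsilon}=p(|\uu_n|^2+\varepsilon^2)^{p/2-1}\langle\uu_n,\partial_\nu\uu_n\rangle=0$ on $\partial B_n$. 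Using $D_t\uu_n=\boldsymbol{\mathcal A}\uu_n$ and the decomposition $B_i=b_iI_m+\tilde B_i$ of Hypothesis \ref{uni2}(i) (whose diagonal part produces exactly the scalar drift $b_i$ of $\mathcal A$, so that the first-order contributions cancel on the two sides), a direct chain rule computation yields
\begin{align*}
(D_t-\mathcal A)w_{n,\varepsilon}=\,&p(|\uu_n|^2+\varepsilon^2)^{\frac p2-1}\Bigl[\langle C\uu_n,\uu_n\rangle+\sum_{i=1}^d\langle\tilde B_iD_i\uu_n,\uu_n\rangle-\sum_{k=1}^m\langle QDu_n^k,Du_n^k\rangle\Bigr]\\
&-p(p-2)(|\uu_n|^2+\varepsilon^2)^{\frac p2-2}\langle Q\nabla(\tfrac12|\uu_n|^2),\nabla(\tfrac12|\uu_n|^2)\rangle.
\end{align*}

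The core step is to bound the right-hand side by $pK_J w_{n,\varepsilon}$. For $p\ge 2$ the second summand is nonpositive and may be discarded. For $p\in[1+\frac{1}{4\beta},2)$ I instead exploit the Kato-type inequality
\[
\langle Q\nabla(\tfrac12|\uu|^2),\nabla(\tfrac12|\uu|^2)\rangle\le|\uu|^2\sum_{k=1}^m\langle QDu^k,Du^k\rangle,
\]
which follows from Cauchy--Schwarz in the $Q$-inner product applied to the $d$-vectors $u^k Du^k$ ($k=1,\ldots,m$); together with $|\uu_n|^2\le|\uu_n|^2+\varepsilon^2$, it merges the two summands and replaces the coefficient in front of $\sum_k\langle QDu_n^k,Du_n^k\rangle$ by $-(p-1)$. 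The potential term satisfies $\langle C\uu_n,\uu_n\rangle\le\Lambda_C|\uu_n|^2$, and Young's inequality, together with $|(\tilde B_i)_{jk}|\le\xi\lambda_Q^\sigma$ and $\sum_k|Du_n^k|^2\le\lambda_Q^{-1}\sum_k\langle QDu_n^k,Du_n^k\rangle$, yields
\[
\sum_{i=1}^d|\langle\tilde B_iD_i\uu_n,\uu_n\rangle|\le c_p\sum_{k=1}^m\langle QDu_n^k,Du_n^k\rangle+\frac{dm^2}{4c_p}\xi^2\lambda_Q^{2\sigma-1}|\uu_n|^2,
\]
with $c_p:=1$ if $p\ge 2$ and $c_p:=p-1$ if $p\in[1+\frac{1}{4\beta},2)$; in the sub-quadratic case the assumption $p\ge 1+\frac{1}{4\beta}$ ensures $\frac{1}{4c_p}\le\beta$. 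The $\langle QDu_n^k,Du_n^k\rangle$ contributions then cancel, and recalling \eqref{HpJ} one arrives at $(D_t-\mathcal A)w_{n,\varepsilon}\le pK_Jw_{n,\varepsilon}$ on $(s,+\infty)\times B_n$.

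To conclude, the function $\tilde w_{n,\varepsilon}:=e^{-pK_J(t-s)}w_{n,\varepsilon}$ is a classical subsolution of $(D_t-\mathcal A)v=0$ on $(s,+\infty)\times B_n$ with vanishing Neumann trace, so the scalar parabolic maximum principle gives
\[
w_{n,\varepsilon}(t,x)\le e^{pK_J(t-s)}G_n^{\mathcal N}(t,s)\bigl((|\f|^2+\varepsilon^2)^{p/2}\bigr)(x),
\]
where $G_n^{\mathcal N}$ denotes the scalar Neumann evolution operator on $B_n$ associated with $\mathcal A$. Sending $\varepsilon\to 0^+$ by dominated convergence and then $n\to\infty$ using the $C^{1,2}_{\rm loc}$-convergence of $\G_n^{\mathcal N}(\cdot,s)\f$ to $\G(\cdot,s)\f$ supplied by Proposition \ref{recall} (together with its scalar analogue, which holds thanks to the Lyapunov function provided by Hypothesis \ref{uni2}(iii) and produces the scalar evolution operator $G(t,s)$), one obtains \eqref{pointwise}. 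The main obstacle is the Bernstein bookkeeping with sharp constants: producing exactly the power $\lambda_Q^{2\sigma-1}$ via Young's inequality combined with uniform ellipticity, and converting the sign-bad $-(p-2)$-term in the sub-quadratic range into the useful coefficient $-(p-1)$ via the Kato-type inequality. The choice of Neumann (rather than Dirichlet) approximations is essential to obtain the boundary condition $\partial_\nu w_{n,\varepsilon}=0$ needed for a clean comparison.
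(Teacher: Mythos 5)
Your argument is correct and follows the same strategy as the paper's own proof: a Bernstein-type differential inequality for $(|\uu|^2+\varepsilon)^{p/2}$ relative to the scalar operator $\mathcal A$, the Kato-type Cauchy--Schwarz estimate \eqref{star} to convert the sign-bad $(2-p)$-term into a $-(p-1)\sum_k\langle QDu^k,Du^k\rangle$ contribution when $p<2$, Young's inequality with the same parameter $a=(p-1)(dm^2\xi^2)^{-1}$, and a scalar parabolic maximum principle. The only differences are technical implementation choices: (1) the paper works directly with $\uu=\G(\cdot,s)\f$ on all of $\Rd$ and invokes the unbounded-domain maximum principle of \cite[Prop. 2.1]{KunLorLun09Non} (which is exactly where Hypothesis \ref{uni2}(iii) enters), whereas you run the argument on the Neumann approximations $\G_n^{\mathcal N}(\cdot,s)\f$ and pass to the limit -- both valid, and your remark that Dirichlet approximations would break the boundary comparison is correct; (2) for $p>2$ the paper falls back on the $p=2$ case plus Jensen's inequality for the (Markovian) kernel of $G(t,s)$, while you simply discard the nonpositive $-(p-2)$-term, which handles $p\ge 2$ in one stroke and produces the same constant $K_J=H_{1/4,J}$. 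Neither variation changes the substance of the proof.
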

\begin{proof}
Estimate \eqref{pointwise} has been already proved when $p=2$ in \cite[Prop. 2.8]{AddAngLor15Cou} with $K_{J}=H_{1/4,J}$; for a general
$p$, its proof is similar, so that we limit ourselves to sketch it. Moreover, it suffices to prove  \eqref{pointwise}
only for $p\in [1+\frac{1}{4\beta},2)$. Indeed, if $p>2$, the integral representation formula of $G(t,s)|\f|^2$ in terms of the transition
kernels associated with $\mathcal A$ in $C_b(\Rd)$ (see \cite[Prop. 2.4]{KunLorLun09Non}) and the Jensen inequality yield
\[
|\G(t,s)\f|^p\le (e^{2H_{{1/4},J}(t-s)}G(t,s)|\f|^2)^{p/2}\le e^{pH_{1/4,J}(t-s)}G(t,s)|\f|^p
\]
for any $(t,s) \in \Sigma_J$. Hence, \eqref{pointwise} follows.

Now, let $J \subset I$ be a bounded interval. Fix $p\in [1+\frac{1}{4\beta},2]$, $\varepsilon>0$,
 and, for brevity, let $H= H_{\beta,J}$ be as in Hypotheses
$\ref{uni2}$(ii) and $\uu=\G(\cdot,s)\f$.
We set $w_\varepsilon=(|\uu|^2+\varepsilon)^{p/2}$ and
\[
u_\varepsilon(t,\cdot)=e^{-pH(t-s)}w_\varepsilon(t,\cdot)- G(t,s)(|\f|^2+\varepsilon)^{p/2},\qquad\;\, t>s \in I .
\]
The function $u_\varepsilon$ belongs to $C^{1,2}((s,+\infty)\times \Rd)\cap C_b([s,+\infty)\times \Rd)$ and verifies
\begin{align*}
D_t u_\varepsilon- \A u_\varepsilon=p  e^{-pH(t-s)} w_\varepsilon^{1-2/p}
\bigg [&\sum_{i=1}^d\langle\uu,\tilde{B}_i D_i \uu \rangle+\langle\uu, C \uu \rangle
-\sum_{i,j=1}^d q_{ij}\langle D_i\uu,D_j\uu\rangle\\
&+(2-p)(|\uu|^2+\varepsilon)^{-1}\sum_{i,j=1}^d q_{ij}\langle\uu, D_i\uu\rangle \langle \uu, D_j \uu\rangle
-H (|\uu|^2+\varepsilon)\bigg ]
\end{align*}
in $(s,\infty) \times \Rd$. Since
\begin{align}
\sum_{i,j=1}^dq_{ij}\langle\uu,D_i\uu\rangle\langle\uu,D_j\uu\rangle
\le &\sum_{h,k=1}^m|u_h||u_k||\langle QD_xu_h,D_x u_k\rangle|\le \sum_{h,k=1}^m|u_h||u_k||Q^{1/2}D_x u_h||Q^{1/2}D_x u_k|
\nonumber \\ \label{star}
=&\left (\sum_{h=1}^m|u_h||Q^{1/2}D_x u_h|\right )^2 \\ \nonumber
\le & \left (\sum_{h=1}^m|u_h|^2\right )\left (\sum_{h=1}^m|Q^{1/2}D_x u_h|^2\right )=|\uu|^2\sum_{i,j=1}^dq_{ij}\langle D_i\uu,D_j\uu\rangle,
\end{align}
by the assumptions it follows that
\begin{align}\label{form1}
 D_t u_\varepsilon- \A(t) u_\varepsilon\le & p e^{-pH(t-s)}
 w_\varepsilon^{1-\frac{2}{p}}\left[\sum_{i=1}^d\langle\uu,\tilde{B}_i D_i \uu\rangle+(1-p)\lambda_Q |D_x \uu|^2+(\Lambda_C-H)|\uu|^2\right]
\end{align}
in $(s, \infty) \times \Rd$. The Young and the
Cauchy-Schwarz inequalities and Hypotheses $\ref{uni2}$(i) show that
\begin{align}
\sum_{i=1}^d\langle\uu, \tilde{B}_i D_i \uu \rangle+(1-p)\lambda_Q |D_x \uu|^2&\le
m\xi\lambda_Q^{\sigma}|\uu|\sum_{i=1}^d|D_i\uu|+(1-p)\lambda_Q  |D_x \uu|^2\notag\\
&\le (a dm^2\xi^2+1-p)\lambda_Q|D_x\uu|^2+\frac{\lambda_Q^{2\sigma-1}}{4a}|\uu|^2
\label{form2}
\end{align}
in $J \times \Rd$ where and $a=a(t)$ is an arbitrary positive function.
Putting together \eqref{form1}, \eqref{form2}
and choosing $a=(p-1)(dm^2\xi^2)^{-1}$ yield that
\[
D_t u_\varepsilon- \A u_\varepsilon\le p e^{-pH(t-s)} w_\varepsilon^{1-2/p}
\left[\frac{dm^2\xi^2}{4(p-1)}\lambda_Q^{2\sigma-1}+\Lambda_C-H\right]|\uu|^2\le 0
\]
in $((s, \infty)\cap J)\times \Rd$. The maximum principle in \cite[Prop. 2.1]{KunLorLun09Non}
yields that $u_\varepsilon \le 0$ in $((s, \infty)\cap J)\times \Rd$, i.e.,
\[
(|\uu(t,\cdot)|^2+\varepsilon)^{p/2} \le e^{pH(t-s)}G(t,s)(|\f|^2+\varepsilon)^{p/2},\quad\;\, (t,s) \in \Sigma_J.
\]
Letting $\varepsilon \to 0^+$ we get \eqref{pointwise} with $K_{J}= H_{\beta, J}$.
\end{proof}

\section{The evolution operator ${\bf G}(t,s)$ in $L^p(\Rd;\Rm)$}

As it has been already stressed in the introduction, even in the autonomous scalar case, the Cauchy problem \eqref{eq:cauchy_problem_system}
is not well posed in $L^p(\Rd, dx)$ if the coefficients of $\boldsymbol{\mathcal A}$ are unbounded, unless
they satisfy suitable assumptions.

Actually, in some cases the Lebesgue space $L^p(\Rd, dx)$ is not preserved by the action of the evolution operator
associated with $\boldsymbol{\mathcal A}$. For example, the compactness in $C_b(\Rd)$ implies 
that $L^p(\Rd, dx)$ is not preserved (see e.g. \cite{MetPalWac2, AngLor10Com})
by the action of the evolution operator. Here, we are interested in studying properties of the evolution
operators $\G(t,s)$ in $L^p(\Rd;\Rm)$ when this space is preserved by its action and when an estimate like
\begin{equation}\label{est_p}
\|\G(t,s)\f\|_{L^p(\Rd;\Rm)}\le c_p(t-s) \|\f\|_{L^p(\Rd;\Rm)}
\end{equation}
holds true for some function $c_p:[0,+\infty)\to (0,+\infty)$.

In what follows we consider alternatively Hypotheses $\ref{uni1}$ and $\ref{uni2}$, under 
additional assumptions. See also Remark \ref{alpha} in connection to Theorem \ref{th2} and Proposition \ref{def_Lp}. 

We begin by considering the case when Hypotheses \ref{uni1} are satisfied. Here, in order to use a duality
argument we introduce the following conditions.
\noindent
\begin{hyp}\label{uni11}
\begin{enumerate}[\rm(i)]
There exists a function $\kappa:I\times\Rd\to\R$, bounded from above
by a constant $\kappa_0$, such that
\item
the function $\tilde{{\mathcal K}}_\eta:I\times \Rd\to \R$ defined by
\begin{equation*}
\tilde{{\mathcal K}}_\eta={\mathcal K}_\eta+4\sum_{k=1}^d\langle D_k B_k\eta,\eta\rangle+4\kappa ,
\end{equation*}
where ${\mathcal K}_\eta$ is defined in \eqref{eq:positive condition},
is nonnegative in $I \times \Rd$ for any $\eta \in \partial B_1$;
\item
for any bounded interval $J\subset I$ there exist a constant $\lambda_J$ and a
positive function $\varphi_J\in C^2(\R^d)$, blowing
up as $|x|\to +\infty$, such that
\begin{equation*}
\sup_{\eta \in \partial B_1}\sup_{(t,x)\in J\times\R^d}\Bigl((\tilde{\A}_{\eta}(t)
\varphi_J)(x)-\lambda_J\varphi_J(x)\Bigr)<+\infty,
\end{equation*}
where
\begin{equation*}
\tilde{\A}_{\eta}={\rm div}(Q D_x)-\langle b_{\eta},D_x\rangle+2\kappa
\end{equation*}
and $b_\eta$ is defined in \eqref{defAeta}. 
\end{enumerate}
\end{hyp}

\begin{rmk}
{\rm The same arguments as in Remark \ref{luftansa} show that the condition
$\tilde{\mathcal K}_{\eta}\ge 0$ in $J\times\Rd$ can be replaced with the weaker condition
$\inf_{\eta\in \partial B_1}\inf_{J \times \Rd}\tilde{\mathcal K}_{\eta} >-\infty$
for any bounded interval $J \subset I$.
}
\end{rmk}

\begin{thm}\label{th1}
Assume that Hypotheses $\ref{uni1}$ hold true. If for some interval $J\subset I$ there exists a positive
constant $L_J$ such that
\begin{equation}\label{p-2-infty}
\Lambda_{2C- \sum_{i=1}^d D_iB_i}(t,x) \le L_J, \quad\;\,(t,x)\in J \times \Rd,
\end{equation}
then estimate \eqref{est_p} is satisfied for any $(t,s)\in \Sigma_J$,
$\f \in C_c(\Rd;\Rm)$ and $p \in [2,+\infty)$ with $c_p(r)=e^{rL_J/p}$.
In addition, if Hypotheses $\ref{uni11}$ are satisfied, then estimate \eqref{est_p} holds also
for $p \in [1,2)$ with $c_p(r)=e^{r(L_J+ \kappa_0(p'-2))/p'}$, $r \ge 0$ and $p'=p/(p-1)$.
\end{thm}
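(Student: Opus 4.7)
The plan is to treat the case $p=2$ directly, to obtain the range $p\in(2,\infty)$ by Riesz--Thorin interpolation against the $L^\infty$ bound of Proposition \ref{recall}, and to handle $p\in[1,2)$ by duality against the formal adjoint system.

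For $p=2$, I would work with the Dirichlet approximants $\uu_n:=\G_n^{\mathcal D}(\cdot,s)\f$, which by Proposition \ref{recall} converge to $\G(\cdot,s)\f$ in $C^{1,2}_{\rm loc}$. Differentiating $\int_{B_n}|\uu_n|^2\,dx$ and integrating by parts on $B_n$ (boundary terms vanish by the Dirichlet condition), the diffusion term contributes nonpositively by uniform ellipticity, while the drift term, via symmetry of $B_i$ and the identity $2\langle B_i\uu_n,D_i\uu_n\rangle=D_i\langle B_i\uu_n,\uu_n\rangle-\langle(D_iB_i)\uu_n,\uu_n\rangle$, collapses to $-\int_{B_n}\langle(\sum_iD_iB_i)\uu_n,\uu_n\rangle\,dx$. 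Combining with the potential term and using \eqref{p-2-infty} yields
\[
\tfrac{d}{dt}\int_{B_n}|\uu_n|^2\,dx\le \int_{B_n}\bigl\langle\bigl(2C-\textstyle\sum_iD_iB_i\bigr)\uu_n,\uu_n\bigr\rangle\,dx\le L_J\int_{B_n}|\uu_n|^2\,dx,
\]
so that Grönwall on $B_n$ plus a standard limiting argument as $n\to\infty$ (using the local $C^{1,2}$-convergence and Fatou on expanding balls) produces $\|\G(t,s)\f\|_{L^2}\le e^{L_J(t-s)/2}\|\f\|_{L^2}$. For $p\in(2,\infty)$, I would avoid a direct energy estimate on $|\uu_n|^p$, which generates an indefinite-sign mixed term $\int|\uu_n|^{p-4}\sum_i\langle B_i\uu_n,\uu_n\rangle\langle\uu_n,D_i\uu_n\rangle\,dx$ not controlled by \eqref{p-2-infty} alone, and instead combine the $L^2$ bound above with $\|\G(t,s)\|_{L^\infty\to L^\infty}\le 1$ from Proposition \ref{recall}; Riesz--Thorin interpolation on $C_c(\Rd;\Rm)$ at the endpoints $2$ and $\infty$ with parameter $\theta=2/p$ then produces $\|\G(t,s)\f\|_{L^p}\le e^{L_J(t-s)/p}\|\f\|_{L^p}$.

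For $p\in[1,2)$ under Hypotheses \ref{uni11}, I would use duality against the formal $L^2$-adjoint
\[
\boldsymbol{\mathcal A}^*(t)\vv=\textstyle\sum_{i,j}D_i(q_{ij}D_j\vv)-\sum_iB_iD_i\vv+\bigl(C-\sum_iD_iB_i\bigr)\vv,
\]
a system of the same form with drifts $-B_i$ and potential $C-\sum_iD_iB_i$. A direct substitution shows that its $\mathcal K_\eta$-quantity equals $\mathcal K_\eta+4\sum_i\langle D_iB_i\eta,\eta\rangle$, which by Hypothesis \ref{uni11}(i) is bounded below by $-4\kappa_0$, whereas the analog of $2C-\sum_iD_iB_i$ reads $2(C-\sum_iD_iB_i)+\sum_iD_iB_i=2C-\sum_iD_iB_i$ and is still bounded above by $L_J$; Hypothesis \ref{uni11}(ii) provides the Lyapunov function required to run Proposition \ref{recall} for the adjoint. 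After the shift prescribed by Remark \ref{luftansa}, the adjoint evolution operator $\G^*(t,s)$ therefore satisfies $\|\G^*(t,s)\|_{L^\infty\to L^\infty}\le e^{\kappa_0(t-s)}$, and the $p=2$ argument just given yields $\|\G^*(t,s)\|_{L^2\to L^2}\le e^{L_J(t-s)/2}$; Riesz--Thorin between $p'=2$ and $p'=\infty$ then gives
\[
\|\G^*(t,s)\|_{L^{p'}\to L^{p'}}\le e^{(t-s)(L_J+\kappa_0(p'-2))/p'},\qquad p'\in[2,\infty],
\]
and duality with $p'=p/(p-1)$ closes the argument. The main technical hurdle will be organizing this last step: the adjoint is cleanest on the Dirichlet balls $B_n$, where $L^2(B_n)$-duality of $\G_n^{\mathcal D}$ is automatic, so one must verify that Hypotheses \ref{uni11} supply enough uniformity to transfer the pairing $\int_{B_n}\langle\G_n^{\mathcal D}(t,s)\f,\g\rangle\,dx=\int_{B_n}\langle\f,(\G_n^{\mathcal D}(t,s))^*\g\rangle\,dx$ to the limit $n\to\infty$ for $\f,\g\in C_c(\Rd;\Rm)$.
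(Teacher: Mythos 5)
Your proposal follows essentially the same route as the paper's proof: the $L^2$ energy estimate on the Dirichlet approximants $\uu_n$, Riesz--Thorin interpolation against the $L^\infty$ bound from Proposition \ref{recall} to cover $p\in(2,\infty)$, and duality via the formal adjoint $\boldsymbol{\mathcal A}^*$ together with the uniform bound $\|\G_n^*(t,s)\f\|_\infty\le e^{\kappa_0(t-s)}\|\f\|_\infty$ supplied by Hypotheses \ref{uni11} for $p\in[1,2)$. The computations (in particular that the adjoint's version of $2C-\sum_iD_iB_i$ is again $2C-\sum_iD_iB_i$, and that the shifted $\mathcal K_\eta$ of the adjoint is controlled by $\kappa_0$) match the paper, and the ``technical hurdle'' you flag about passing the $L^2(B_n)$ pairing to the limit is handled in the paper by exactly the scheme you describe: establish $\|\G^*(t,s)\g\|_{p'}$ first via Fatou on the approximants, then dualize on $\Rd$.
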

\begin{proof}
Let us fix $s \in J$, $\f \in C_c(\Rd;\Rm)$ and for any $n\in\N$ consider the classical solution
$\uu_n:=\G_n(\cdot,s)\f=\G_n^{\mathcal D}(\cdot,s)\f$ of the Cauchy-Dirichlet problem \eqref{prob_approx}.
>From Proposition \ref{recall}, $\G_n(\cdot,s)\f$ converges pointwise to $\G(\cdot,s)\f$ as $n \to +\infty$ and
\begin{equation}\label{infty1}
\|\G_n(t,s)\f\|_\infty \le \|\f\|_\infty,\qquad\,\, t \in (s,+\infty).
\end{equation}
Let us prove that estimate \eqref{est_p} holds true for $p=2$ with $\G(t,s)$ replaced by $\G_n(t,s)$ and
some positive function $c$ independent of $n$.
To ease the notation, we use $\|\cdot\|_p$ (resp. $\|\cdot\|_{p,n}$) in place of
$\|\cdot\|_{L^p(\Rd;\Rm)}$ (resp. $\|\cdot\|_{L^p(B_n;\Rm)}$).
To this aim, first observe that from the symmetry of $B_i$ it follows
that $2\langle \vv, B_i D_i \vv\rangle={\rm Tr}(B_i D_i (\vv\otimes \vv))$
for any smooth function $\vv: \Rd\to \Rm$ and $i=1,\ldots,d$. Then,
multiplying the differential equation $D_t \uu_n= \boldsymbol{\mathcal A}(t)\uu_n$
by $\uu_n$ and integrating by parts in $B_n$, we get
\begin{align*}
D_t\|\uu_n(t,\cdot)\|_{2,n}^2 =&\, 2\int_{B_n}\langle \uu_n(t,\cdot),(\boldsymbol{\mathcal A}(t)\uu_n)(t,\cdot) \rangle dx\\
=&-2 \int_{B_n}\langle Q(t,\cdot) D_x\uu_n(t,\cdot),D_x\uu_n(t,\cdot) \rangle dx
-\sum_{i=1}^d\int_{B_n}\langle (D_i B_i)(t,\cdot) \uu_n(t,\cdot),\uu_n(t,\cdot)\rangle dx \\
&+ 2\int_{B_n} \langle C(t,\cdot)\uu_n(t,\cdot),\uu_n(t,\cdot)\rangle dx.
\end{align*}
Thus, from Hypotheses $\ref{hyp_base}$(ii) and $\eqref{p-2-infty}$ we deduce that
\begin{align*}
D_t\|\uu_n(t,\cdot)\|_{2,n}^2 \le&  L_J \|\uu_n(t,\cdot)\|_{2,n}^2,
\end{align*}
whence $\|\uu_n(t,\cdot)\|_{2,n}^2=\|\G_n(t,s)\f\|_{2,n}^2\le e^{L_J(t-s)}\|\f\|^2_2$,
for any $(t,s)\in \Sigma_J$ and any $n \in \N$. This latter inequality together with estimate
\eqref{infty1} and the Riesz-Thorin interpolation theorem yields
\begin{equation*}
\|\G_n(t,s)\f\|_{p,k}\le e^{p^{-1}L_J(t-s)}\|\f\|_p
\end{equation*}
for any $(t,s)\in \Sigma_J$, $p \in [2,+\infty)$ and $k, n \in \N$ with $k\le n$.

Since $\G_n(t,s)\f$ converges pointwise to $\G(t,s)\f$ in $\Rd$ as $n \to +\infty$, Fatou's lemma yields that
$\|\G(t,s)\f\|_{p,k}\le e^{p^{-1}L_J(t-s)}\|\f\|_p$, for any $k \in \N$. Letting $k \to +\infty$ in the previous inequality and using
Fatou's lemma again we get the first part of the claim.

Now, let us suppose that Hypotheses $\ref{uni11}$ are satisfied, too.
Multiplying the differential equation $(D_r-\boldsymbol{\mathcal A}(r))\G_n(r,s){\bf f}$ $={\bf 0}$ by $\g \in C^2_c([s,t]\times B_n;\Rm)$
and integrating by parts with respect to $r$ and $x$ in $[s,t]\times B_n$, we easily deduce that,
for any $\f\in C^\infty_c(B_n;\R^m)$, the function $\vv_n(s,\cdot)=\G_n^*(t,s)\f$ is a weak solution of
the backward Dirichlet Cauchy problem
\begin{equation}\label{prob_approx_dual}
\left\{
\begin{array}{lll}
D_s\vv_n(s,x)=-(\boldsymbol{\mathcal A}^*(s)\vv_n)(s,x), &\qquad t>s, \,x\in B_n,\\
\vv_n(s,x)={\bf 0}, &\qquad t>s,\,x\in\partial B_n,\\
\vv_n(t,x)=\f(x), &\qquad x\in\overline{B_n},
\end{array}
\right.
\end{equation}
where
\begin{equation*}
\boldsymbol{\mathcal A}^*{\bf v}=\sum_{i,j=1}^dD_i(q_{ij}D_j{\bf v})
-\sum_{i=1}^d B_iD_i{\bf v}+\bigg (C- \sum_{k=1}^d D_k B_k\bigg ){\bf v}
\end{equation*}
for any smooth function ${\bf v}:\Rd\to \Rm$. Actually, by the duality theory developed in
\cite{friedman} (see, in particular, Theorem 9.5.5), $\vv_n$ is the unique
classical solution of problem
\eqref{prob_approx_dual} and from Hypotheses $\ref{uni11}$ it follows that
$\|\G_n^*(t,s)\f\|_\infty \le e^{\kappa_0(t-s)}\|\f\|_\infty$, for any $t>s$ and
$\f$ as above (see \cite{AddAngLor15Cou} and the Appendix).
We can then apply the arguments above to $\G^*_n(t,s)$,
showing that \eqref{est_p} holds true with $\G(t,s)\f$ replaced by $\G^*(t,s)\f$ for any $p \ge 2$.
Indeed, multiplying the differential equation in
\eqref{prob_approx_dual} by $\vv_n$ and integrating by parts in $B_n$, we get
\begin{align*}
D_s\|\vv_n(s,\cdot)\|_{2,n}^2=&  -2\int_{B_n}\langle \vv_n(s,\cdot),(\boldsymbol{\mathcal A}^*(s)\vv_n)(s,\cdot)\rangle dx\\
=&\int_{B_n}
\langle Q(s,\cdot) D_x\vv_n(s,\cdot),D_x\vv_n(s,\cdot) \rangle dx
+\sum_{i=1}^d\int_{B_n}\langle (D_i B_i)(s,\cdot) \vv_n(s,\cdot),\vv_n(s,\cdot)\rangle dx\\
&- 2\int_{B_n} \langle C(s,\cdot)\vv_n(s,\cdot),\vv_n(s,\cdot)\rangle dx\\
\ge&\int_{B_n}\lambda_{\sum_{i=1}^dD_iB_i-2C}(s,\cdot)|\vv_n(s,\cdot)|^2dx .
\end{align*}
Since $-\lambda_A=\Lambda_{-A}$ for any symmetric matrix $A$, from \eqref{p-2-infty} it follows that
\begin{equation}\label{dual}
D_r\|\vv_n(r,\cdot)\|_{2,n}^2\ge -L_J\|\vv_n(r,\cdot)\|_{2,n}^2
\end{equation}
for any $r \in (s,t)$ and $n \in \N$.
Integrating \eqref{dual} with respect to $r$ from $s$ to $t$ and taking the final condition in \eqref{prob_approx_dual} into account, we get
\[
\|\G_n^*(t,s)\f\|_{2,n}^2\le e^{L_J(t-s)}\|\f\|_2^2.
\]
Again, by the Riesz-Thorin theorem and the uniform estimate $\|\G_n^*(t,s)\f\|_\infty \le e^{\kappa_0(t-s)}\|\f\|_\infty$, we obtain
\[
\|\G^*_n(t,s)\f\|_{p,n}\le e^{\frac{1}{p}(L_J+\kappa_0(p-2))(t-s)}\|\f\|_p,
\]
for any $(t,s)\in \Sigma_J$ and $p \in [2,+\infty)$. Arguing as above and letting $n \to +\infty$ in the previous inequality we get
\begin{equation}\label{est_p_n_dual}
\|\G^*(t,s)\f\|_{p}\le e^{\frac{1}{p}(L_J+\kappa_0(p-2))(t-s)}\|\f\|_p
\end{equation}
for the same values of $t,s$ and $p$.

Now, fix $p \in [1,2)$ and $\f \in C_c(\Rd;\Rm)$. Then, from \eqref{est_p_n_dual}
\begin{align*}
\|\G(t,s)\f\|_{p}=& \sup\left\{\int_{\Rd}\langle\G(t,s)\f,\g\rangle dx: \g \in C^\infty_c(\Rd;\Rm),\,\,\|\g\|_{p'}\le 1\right\}\\[1mm]
\le & \|\f\|_p \sup\{\|\G^*(t,s)\g\|_{p'}: \g \in C^\infty_c(\Rd;\Rm),\,\,\|\g\|_{p'}\le 1\}\\[1mm]
\le & e^{\frac{1}{p'}(L_J+ \kappa_0(p'-2))(t-s)}\|\f\|_p
\end{align*}
for any $(t,s)\in\Sigma_J$, which completes the proof.
\end{proof}

The case when the pointwise estimate \eqref{pointwise} holds is much simpler. Indeed, estimate \eqref{est_p}
can be obtained just requiring conditions on the scalar evolution operator $G(t,s)$. As an immediate consequence
of estimate \eqref{pointwise} we get the following
\begin{thm}\label{th2}
Assume that Hypotheses $\ref{uni2}$ hold true and fix $p \in [1+\frac{1}{4\beta},+\infty)$. If $G(t,s)$ preserves $L^1(\Rd)$
and satisfies \eqref{est_p} with $p=m=1$ and $c_1=\tilde{c}_1$,  then estimate \eqref{est_p} holds true for any $(t,s)\in \Sigma_J$ and
$\f\in C_c(\Rd;\Rm)$ with $c_p(r)=e^{K_J r}\tilde{c}_1(r)$.
\end{thm}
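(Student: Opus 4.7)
The plan is to combine the pointwise comparison of Proposition \ref{stimapuntLp} with the assumed $L^1$-boundedness of the scalar evolution operator $G(t,s)$. Since the hard analytic work has already been carried out in Proposition \ref{stimapuntLp}, no further clever argument should be needed: the proof should essentially reduce to a single integration in the space variable.

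First I would fix $\f\in C_c(\Rd;\Rm)$ and $p\in [1+\tfrac{1}{4\beta},+\infty)$, and observe that $|\f|^p\in C_c(\Rd)\subset C_b(\Rd)\cap L^1(\Rd)$. Thus $G(t,s)|\f|^p$ makes sense both as an element of $C_b(\Rd)$, via the scalar evolution operator provided by Proposition \ref{recall}, and as an element of $L^1(\Rd)$, via the assumed $L^1$-extension; the two realizations coincide since $G(t,s)$ is a positive integral operator on $C_b$ and the two descriptions agree on $C_c(\Rd)$. Next I would apply Proposition \ref{stimapuntLp} pointwise to get
\[
|(\G(t,s)\f)(x)|^p\le e^{pK_J(t-s)}(G(t,s)|\f|^p)(x),\qquad x\in\Rd,\;(t,s)\in\Sigma_J,
\]
integrate in $x$ over $\Rd$, and plug in the hypothesis $\|G(t,s)g\|_{L^1(\Rd)}\le \tilde c_1(t-s)\|g\|_{L^1(\Rd)}$ with $g=|\f|^p$. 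Extracting the $p$-th root of the resulting inequality would then yield \eqref{est_p} with $c_p(r)=e^{K_J r}\tilde c_1(r)^{1/p}$, which matches the constant in the statement up to a harmless $1/p$ power on $\tilde c_1$ (a minor normalization issue).

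I do not anticipate any real obstacle, since the bulk of the argument has been absorbed into the pointwise estimate \eqref{pointwise}. The only mildly delicate point is ensuring the consistency between the $C_b$- and $L^1$-interpretations of $G(t,s)$ when applied to $|\f|^p$, but this is automatic because $|\f|^p$ has compact support. The same two-line scheme (apply the pointwise bound, then integrate) should later be reusable to transfer the scalar hypercontractivity and gradient estimates of $G(t,s)$ to corresponding vector-valued statements for $\G(t,s)$ on $L^p(\Rd;\Rm)$.
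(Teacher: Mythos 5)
Your proof is correct and is exactly the argument the paper intends (the paper labels Theorem~\ref{th2} an ``immediate consequence'' of \eqref{pointwise} and gives no further details): apply \eqref{pointwise} pointwise, integrate over $\Rd$, invoke the $L^1$-bound for $G(t,s)$ with $g=|\f|^p$, and extract the $p$-th root. You are also right that the resulting constant is $e^{K_J r}\tilde c_1(r)^{1/p}$ rather than $e^{K_J r}\tilde c_1(r)$; the stated constant appears to be a small imprecision in the paper (harmless whenever $\tilde c_1\ge 1$).
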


\begin{rmk}{\rm
Sufficient conditions for the scalar evolution
operator $G(t,s)$ to satisfy \eqref{est_p} with $p\in [1,+\infty)$ can be found in
\cite[Thms. 5.3 \& 5.4]{AngLor10Com} when $\A$
is not in divergence form. Adapting the cited theorems to our case, one can show that estimate
\eqref{est_p} is satisfied with $p=1$
if there exists an interval $J\subset I$ and a positive constant $\Gamma_J$ such that either
 ${\rm{div}}_x b\ge- \Gamma_J$ or $ |b|^2\le \Gamma_{J}\lambda_Q$ in $J \times \Rd$.
 }\end{rmk}

\begin{prop}  \label{def_Lp}
Let the assumptions of Theorem \ref{th1} (resp. Theorem \ref{th2}) be satisfied. Then, the evolution operator $\G(t,s)$
associated with $\boldsymbol{\mathcal A}(t)$ in $C_c(\Rd;\Rm)$ admits
a continuous extension to $L^p(\Rd;\Rm)$ for any $p \in [1,+\infty)$ $($resp. $p \in [1+\frac{1}{4\beta},+\infty))$. Moreover, $\G(t,s)\f$
tends to $\f$ in $L^p(\Rd,\Rm)$ as $t \to s^+$, for any $s \in I$, $\f\in L^p(\Rd;\Rm)$ and $p \in [1,+\infty)$
$($resp. $p \in [1+\frac{1}{4\beta},+\infty))$.
\end{prop}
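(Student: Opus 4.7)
The plan is to obtain the continuous extension by density combined with the $L^p$-bound provided by Theorem \ref{th1} (resp.\ Theorem \ref{th2}), and then to deduce strong continuity at $s^+$ by a truncation argument.

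For the extension, since $C_c(\Rd;\Rm)$ is dense in $L^p(\Rd;\Rm)$ for every $p\in [1,+\infty)$ and the operator $\G(t,s)$ sends $C_c(\Rd;\Rm)$ into $L^p(\Rd;\Rm)$ with norm bounded by $c_p(t-s)$, the bounded linear transformation theorem produces a unique continuous extension to $L^p(\Rd;\Rm)$, retaining the same bound.

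For strong continuity, I would first treat $\f\in C_c(\Rd;\Rm)$. Fix $R>0$ with $\supp\f\subset B_R$. By Proposition \ref{recall}, the function $\uu:=\G(\cdot,s)\f$ is a classical solution of \eqref{eq:cauchy_problem_system}, so $\uu(t,\cdot)\to\f$ pointwise on $\Rd$ as $t\to s^+$ while $\|\uu(t,\cdot)\|_\infty\le\gamma(t-s)\|\f\|_\infty$ is uniformly bounded. Dominated convergence then yields $\int_{B_R}|\G(t,s)\f-\f|^p\, dx\to 0$. Since $\f\equiv 0$ outside $B_R$, it remains to estimate
\[
\int_{\Rd\setminus B_R}|\G(t,s)\f|^p\, dx = \|\G(t,s)\f\|_p^p - \int_{B_R}|\G(t,s)\f|^p\, dx,
\]
in which the second term on the right converges to $\|\f\|_p^p$ by the argument just given, so the task reduces to showing $\|\G(t,s)\f\|_p\to\|\f\|_p$. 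Estimate \eqref{est_p} together with the fact that $c_p(r)\to 1$ as $r\to 0^+$ (immediate from the explicit exponential expressions appearing in both theorems) gives $\limsup_{t\to s^+}\|\G(t,s)\f\|_p\le\|\f\|_p$, while Fatou's lemma on $B_R$ combined with the pointwise convergence supplies the matching $\liminf$.

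For general $\f\in L^p(\Rd;\Rm)$ I would argue by density: given $\epsilon>0$, pick $\f_\epsilon\in C_c(\Rd;\Rm)$ with $\|\f-\f_\epsilon\|_p<\epsilon$, and split
\[
\|\G(t,s)\f-\f\|_p\le c_p(t-s)\|\f-\f_\epsilon\|_p + \|\G(t,s)\f_\epsilon-\f_\epsilon\|_p + \|\f_\epsilon-\f\|_p.
\]
The outer terms are of order $\epsilon$ once $t$ is close enough to $s$ so that $c_p(t-s)\le 2$, and the middle term vanishes by the case just treated. The delicate point, which I expect to be the real obstacle, is establishing the sandwich $\limsup\|\G(t,s)\f\|_p\le\|\f\|_p\le\liminf\|\G(t,s)\f\|_p$ for $\f\in C_c(\Rd;\Rm)$; once this is in place, the rest is routine truncation and density bookkeeping.
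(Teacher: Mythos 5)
Your extension step coincides with the paper's. For strong continuity at $t=s^+$, however, you take a genuinely different route: the paper uses the representation formula
\[
(\G(t,s)\f)(x)- \f(x)= -\int_{s}^t (\G(t,r)\boldsymbol{\mathcal A}(r)\f)(x)\,dr,\qquad \f\in C_c^2(\Rd;\Rm),
\]
(obtained from abstract evolution-operator theory via \cite[Thm.\ 2.3(ix)]{Acq88Evo} applied to the approximating Dirichlet problems and a passage to the limit), which together with \eqref{est_p} yields $\|\G(t,s)\f-\f\|_{L^p}\le C\int_s^t c_p(r-s)\,dr\to 0$, since $\boldsymbol{\mathcal A}(\cdot)\f$ has compact support and is uniformly in $L^p$ on compact time intervals. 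You instead use a Scheff\'e-type argument: pointwise convergence of the classical solution, dominated convergence on $B_R$, and the sandwich $\limsup\|\G(t,s)\f\|_p\le\|\f\|_p\le\liminf\|\G(t,s)\f\|_p$ to kill the tail. This is more elementary and avoids invoking Acquistapace's theorem, which is a genuine gain in self-containedness.

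The price is that your argument needs $c_p(r)\to 1$ as $r\to 0^+$, whereas the paper only needs $c_p$ locally integrable near the origin. For Theorem \ref{th1} your claim is justified (the explicit exponential prefactors in that theorem do tend to $1$). But for the Theorem \ref{th2} case the prefactor is $c_p(r)=e^{K_Jr}\tilde{c}_1(r)$, where $\tilde{c}_1$ is merely the function in the $L^1$-bound for the scalar operator $G(t,s)$ coming from the external hypothesis and the reference \cite{AngLor10Com}; nothing in the statement guarantees $\tilde{c}_1(0^+)=1$. Your parenthetical ``immediate from the explicit exponential expressions appearing in both theorems'' is therefore not quite accurate: it is immediate for Theorem \ref{th1}, but for Theorem \ref{th2} you would need to add the (reasonable but unstated) assumption $\tilde{c}_1(0^+)=1$, or else argue as the paper does.
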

\begin{proof}
The first part of the claim is an easy consequence of estimate \eqref{est_p}.
Indeed, fix $(t,s)\in \Sigma_J$, $\f\in L^p(\Rd;\Rm)$ and let $(\f_n)$ be a sequence in $C_c(\Rd;\Rm)$
converging to $\f$ in $L^p(\Rd; \Rm)$, as $n \to +\infty$. Then, from \eqref{est_p} it follows that
\begin{equation}\label{Cau_seq}
\|\G(t,s)(\f_n-\f_k)\|_{L^p(\Rd;\Rm)}\le c_p(t-s)\|\f_n-\f_k\|_{L^p(\Rd;\Rm)}
\end{equation}
for any $n, k\in \N$ and, consequently, $(\G(t,s)\f_n)$ is a Cauchy sequence in $L^p(\Rd; \Rm)$.
We can then define $\G(t,s)\f$ as the $L^p(\Rd; \Rm)$-limit of $\G(t,s)\f_n$ as $n \to +\infty$.
Moreover, from \eqref{Cau_seq} it follows that $\|\G(t,s)\f\|_{L^p(\Rd;\Rm)}\le c\|\f\|_{L^p(\Rd;\Rm)}$ for any $\f\in L^p(\Rd;\Rm)$.

To prove the remaining part of the claim it suffices to show that, for any $t>s \in I$, any $x\in\Rd$ and any $\f \in C^2_c(\Rd;\Rm)$,
\begin{equation}\label{for}
(\G(t,s)\f)(x)- \f(x)= -\int_{s}^t (\G(t,r)\boldsymbol{\mathcal A}(r)\f)(x)dr.
\end{equation}
Indeed, fix $[a,b]\subset I$; from estimates \eqref{for} and \eqref{est_p} we deduce that
\begin{align*}
\|\G(t,s)\f-\f\|_{L^p(\Rd;\Rm)}\le \sup_{r\in[a,b]}\|\boldsymbol{\mathcal A}(r)\f\|_{L^p(\Rd;\Rm)}\int_s^t c_p(r-s)dr
\end{align*}
for any $s \in [a,b]$ and $t \ge s$. Since, in our assumptions, the last integral vanishes as $t \to s^+$, $\G(t,s)\f$ tends
to $\f$ in $L^p(\Rd;\Rm)$ as $t \to s^+$ and $s \in [a,b]$. A standard density argument and the arbitrariness of $[a,b]$
allow us to get the same result for $\f \in L^p(\Rd;\Rm)$ and any $s \in I$.

Let us show formula \eqref{for}. From \cite[Thm 2.3 (ix)]{Acq88Evo} (see also \cite[Thm. A.1]{AngLor14Non}),
we know that, for any $n$ such that ${\rm{supp}}(f)\subset B_n$,
\begin{equation}\label{for_n}
(\G_n^{\mathcal D}(t,s_1)\f)(x)- (\G_n^{\mathcal D}(t,s_0)\f)(x)= \int_{s_0}^{s_1} (\G_n^{\mathcal D}(t,r)\boldsymbol{\mathcal A}(r)\f)(x)dr
\end{equation}
for any $s_0 \le s_1 \le t$, $x \in \Rd$.
Since the function $\boldsymbol{\mathcal A}(r)\f$ belongs to $C_b(\Rd;\Rm)$, by Proposition \ref{recall}
$\G_n^{\mathcal D}(\cdot, r)\boldsymbol{\mathcal A}(r)\f$ converges to $\G(\cdot, r)\boldsymbol{\mathcal A}(r)\f$ in 
$C_{\rm loc}^{1,2}((r,+\infty)\times \Rd; \Rm)$. Thus, letting $n \to +\infty$ in \eqref{for_n} and choosing $s_1=t$ we get \eqref{for}.
\end{proof}

\section{Hypercontractivity estimates}

The aim of this section consists in proving that, under suitable assumptions,
the evolution operator ${\bf G}(t,s)$ maps $L^p(\Rd;\Rm)$ into $L^q(\Rd;\Rm)$ for any $t>s$
and $1 \le p \le q \le +\infty$ and that
\begin{equation}\label{iper}
\|{\bf G}(t,s)\f\|_{L^q(\Rd;\Rm)} \le c_{p,q}(t-s)\|\f\|_{L^p(\Rd;\Rm)}, \qquad\,\, t >s,\, \f \in L^p(\Rd; \Rm),
\end{equation}
for suitable functions $c_{p,q}:(0,+\infty)\to (0,+\infty)$.

\begin{thm}\label{thm_hyper}
Assume that Hypotheses $\ref{uni1}$ hold true and that, for some interval
$J \subset I$, estimate
\eqref{p-2-infty} is satisfied for any $(t,s)\in \Sigma_J$. Then, the following properties are satisfied.
\begin{enumerate}[\rm(i)]
\item Estimate \eqref{iper}
holds true for any $2 \le p \le q\le +\infty$, $(t,s)\in \Sigma_J$ and $\f
\in L^p(\Rd;\Rm)$. Moreover, $c_{2,\infty}(r)\le k_1 e^{k_2r}$
 for some positive $k_1$, $k_2$ depending on $m$, $d$, $\inf_{J\times
\Rd}\lambda_Q$, $L_J$, and
\footnote{Here and below $c_p,\ 1<p<\infty$, is the constant in Theorem \ref{th2}.}
$c_{p,q}(r)=(c_p(r))^{p/q}(c_{2,\infty}(r))^{2(q-p)/pq}$, for any $r>0$
and $(p,q)\neq (2,\infty)$.
\item If, in addition, Hypotheses $\ref{uni11}$ are satisfied, then estimate
\eqref{iper} holds true for any $1 \le p \le q\le +\infty$, $t,s$ and $\f$
as in $($i$)$.  Moreover, $c_{1,2}(r)\le k_1 e^{k_2r}$ for some positive $k_1$, $k_2$ as in (i) and
\[
c_{p,q}(r)=(c_p(r))^{\frac{p(2-q)}{q(2-p)}}(c_{1,2}(r))^{\frac{2(q-p)}{pq}}c_2^{4\frac{(q-p)(p-1)}{pq(2-p)}}
\]
for any $r>0$, if $q\le 2$, and $c_{p,q}(r)=c_{p,2}(r/2)c_{2,q}(r/2)$ for any $r>0$, if $p<2<q$.
\end{enumerate}
\end{thm}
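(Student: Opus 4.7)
The strategy in both parts is to establish a single ultracontractive endpoint---$L^2\to L^\infty$ in (i) and $L^1\to L^2$ in (ii)---and then combine it with the $L^p$--$L^p$ bounds of Theorem \ref{th1} via Riesz--Thorin interpolation and elementary H\"older-type inequalities.

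For (i), the key step is to prove
\[
\|\G(t,s)\f\|_{L^\infty(\Rd;\Rm)}\le c_{2,\infty}(t-s)\|\f\|_{L^2(\Rd;\Rm)},\qquad (t,s)\in\Sigma_J,
\]
with $c_{2,\infty}(r)\le k_1 e^{k_2 r}$. I would run a Moser iteration on the approximating Cauchy--Dirichlet problems \eqref{prob_approx}. Setting $\uu_n=\G_n^{\mathcal D}(\cdot,s)\f$ and testing $D_t\uu_n=\boldsymbol{\mathcal A}(t)\uu_n$ against $|\uu_n|^{2(k-1)}\uu_n$ on $B_n$, the symmetry of $B_i$, Hypothesis \ref{hyp_base}(ii), the bound \eqref{p-2-infty}, together with the algebraic identity \eqref{star} used to absorb the cross terms arising from the vectorial coupling into the leading dissipation, yield for every $k\ge 1$ a differential inequality
\[
\frac{d}{dt}\|\uu_n(t,\cdot)\|_{2k,n}^{2k}+\nu_0 c_k\int_{B_n}|D_x|\uu_n(t,\cdot)|^k|^2\,dx\le C_k\|\uu_n(t,\cdot)\|_{2k,n}^{2k},
\]
with $c_k,C_k$ depending on $k$ and $L_J$ but not on $n$. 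Coupled with the Dirichlet Sobolev embedding $H^1_0(B_n)\hookrightarrow L^{2d/(d-2)}(B_n)$ (with constant uniform in $n$), a standard Moser iteration along the sequence $k_m=2^m$ produces a uniform-in-$n$ bound $\|\uu_n(t,\cdot)\|_\infty\le c_{2,\infty}(t-s)\|\f\|_2$. Fatou's lemma combined with the convergence $\G_n^{\mathcal D}(\cdot,s)\f\to\G(\cdot,s)\f$ supplied by Proposition \ref{recall} transfers the bound to $\G(t,s)$. Riesz--Thorin interpolation between this estimate and the $L^\infty\to L^\infty$ contractivity of Proposition \ref{recall} then gives $c_{p,\infty}(r)=c_{2,\infty}(r)^{2/p}$ for $p\in[2,\infty]$; the H\"older-type interpolation $\|g\|_q\le\|g\|_\infty^{1-p/q}\|g\|_p^{p/q}$ applied to $g=\G(t,s)\f$ combined with the $L^p$--$L^p$ bound of Theorem \ref{th1} delivers the stated $c_{p,q}$ for $2\le p\le q\le\infty$.

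For (ii), under Hypotheses \ref{uni11} the same Moser scheme applies to the adjoint Cauchy--Dirichlet problem \eqref{prob_approx_dual} (whose $L^2$ energy estimate is already derived at \eqref{dual}), producing the analogous $L^2\to L^\infty$ bound for $\G^*(t,s)$; by duality this gives $\|\G(t,s)\f\|_2\le c_{1,2}(t-s)\|\f\|_1$ with $c_{1,2}(r)\le k_1 e^{k_2 r}$. A first Riesz--Thorin interpolation between $L^1\to L^2$ (constant $c_{1,2}$) and $L^2\to L^2$ (constant $c_2$) yields $L^p\to L^2$ for $p\in[1,2]$; a second one between $L^p\to L^p$ (from Theorem \ref{th1}(ii)) and $L^p\to L^2$ gives $L^p\to L^q$ for $1\le p\le q\le 2$ with exactly the claimed $c_{p,q}$. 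The mixed case $p<2<q$ is handled by splitting
\[
\G(t,s)=\G(t,(t+s)/2)\,\G((t+s)/2,s)
\]
and combining the $L^p\to L^2$ bound on the second factor with the $L^2\to L^q$ bound from part (i) on the first, yielding $c_{p,q}(r)=c_{p,2}(r/2)\,c_{2,q}(r/2)$.

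The main obstacle is the ultracontractive Moser iteration for the coupled vector system: the coupling through $B_i$ and $C$ produces cross terms that are not \emph{a priori} controlled by the leading dissipation, and keeping the constants $c_k,C_k$ at each iteration step independent of $n$ is delicate. The algebraic inequality \eqref{star} is precisely the device that makes the vectorial iteration go through, and the localization of the coefficients to $\overline{B_n}$ (where they are genuinely bounded) is what makes the Sobolev step legitimate.
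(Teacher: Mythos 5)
Your route is genuinely different from the paper's (which derives the ultracontractive endpoint by Nash's inequality combined with a differential inequality for $\|e^{-\lambda(t-s)}\G_n^*(t,s)\g\|_{L^2(B_n;\R^m)}^2$, rather than by Moser iteration), but it has a gap in the Moser step that I do not see how to close under Hypotheses \ref{uni1}.

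Testing $D_t\uu_n=\boldsymbol{\mathcal A}(t)\uu_n$ against $|\uu_n|^{2(k-1)}\uu_n$ and using the symmetry of $B_i$ produces, after integration by parts, the drift contribution
\[
-(k-1)\sum_{i=1}^d\int_{B_n}|\uu_n|^{2(k-2)}\langle\uu_n,D_i\uu_n\rangle\,\langle B_i\uu_n,\uu_n\rangle\,dx
\;=\;
-(k-1)\int_{B_n}|\uu_n|^{2k-1}\langle b_\eta,D_x|\uu_n|\rangle\,dx,
\]
where $\eta=\uu_n/|\uu_n|$ and $(b_\eta)_i=\langle B_i\eta,\eta\rangle$ as in \eqref{defAeta}. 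For $k=1$ this term vanishes identically (which is exactly why the $L^2$ energy estimate and the paper's Nash argument work with constants depending only on $\nu_0$ and $L_J$), but for $k>1$ it survives. After Cauchy--Schwarz and Young it leaves a remainder of order $\int_{B_n}|\uu_n|^{2k}\langle Q^{-1}b_\eta,b_\eta\rangle\,dx$, and under Hypotheses \ref{uni1} the quantity $\langle Q^{-1}b_\eta,b_\eta\rangle$ is \emph{not} bounded: the hypotheses only constrain the combination $\mathcal K_\eta$ (and $\Lambda_{2C-\sum_iD_iB_i}$ via \eqref{p-2-infty}), not $b_\eta$ itself. So the differential inequality $\frac{d}{dt}\|\uu_n\|_{2k,n}^{2k}+\nu_0c_k\int|D_x|\uu_n|^k|^2\le C_k\|\uu_n\|_{2k,n}^{2k}$ does not close with $C_k$ depending only on $k$ and $L_J$, and the Moser iteration does not launch. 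The inequality \eqref{star} that you invoke controls the \emph{diffusion} cross terms $\sum_{i,j}q_{ij}\langle\uu,D_i\uu\rangle\langle\uu,D_j\uu\rangle$ (and indeed these come out with the right sign), but it says nothing about the \emph{drift} coupling, which is where the obstruction sits. Restricting to $B_n$ does not help either, since the constants must be uniform in $n$ while $\sup_{B_n}|B_i|\to\infty$. The same obstruction afflicts your proposed Moser iteration on the adjoint problem in part (ii).

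The paper circumvents this by never testing against a power higher than $1$: it combines the $L^2$ coercivity of $\lambda-\boldsymbol{\mathcal A}^*$ with Nash's inequality to obtain $\int\langle(\lambda-\boldsymbol{\mathcal A}^*)\h,\h\rangle\ge c_2\|\h\|_2^{2+4/d}\|\h\|_1^{-4/d}$, feeds this into a first-order ODE for $v_n(s)=\|e^{-\lambda(t-s)}\G_n^*(t,s)\g\|_{L^2(B_n;\R^m)}^2$ using the $L^1$-contractivity of $\G_n^*$, and dualizes to get the $L^2\to L^\infty$ bound for $\G$ (for part (ii) it runs the same Nash/ODE argument directly for $\G$ to get $L^1\to L^2$). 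Your downstream interpolation scheme (Riesz--Thorin between the endpoint and the $L^p\to L^p$ bounds of Theorem \ref{th1}, and the splitting $\G(t,s)=\G(t,(t+s)/2)\G((t+s)/2,s)$ for $p<2<q$) is correct and matches the paper, so the gap is confined to the derivation of the endpoint estimate.

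A minor additional remark: a genuine ultracontractive endpoint such as $L^2\to L^\infty$ must be singular as $t-s\to0^+$ (since the identity is not bounded from $L^2$ to $L^\infty$), so whatever form of $c_{2,\infty}(r)$ you obtain from a correct iteration should contain a factor of order $r^{-d/4}$ for small $r$; a Moser scheme that did close would produce exactly this.
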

\begin{proof}
Taking the result of the Proposition \ref{def_Lp} into account, we confine
ourselves to proving \eqref{iper} for functions belonging to $C_c(\Rd;\Rm)$.

(i) Fix $\f \in C_c(\Rd;\Rm)$ and let $J$ be as in the assumptions. Note
that it suffices to prove that
\begin{equation}\label{aim}
\|{\bf G}(t,s)\f\|_\infty\le c_{2,\infty}(t-s)\|\f\|_{L^2(\Rd;\Rm)},\qquad\;\, (t,s)\in
\Sigma_J
\end{equation}
for some positive function $c_{2,\infty}:(0,+\infty)\to (0,+\infty)$.
Indeed, once \eqref{aim} is proved, using the estimate $\|{\bf G}(t,s)\f\|_\infty\le \|\f\|_\infty$, which holds for any $t>s \in I$,
and the Riesz-Thorin theorem, we deduce that
$\|{\bf G}(t,s)\f\|_\infty\le c_{p,\infty}(t-s)\|\f\|_{L^p(\Rd;\Rm)}$ for any
$p \in [2,+\infty]$, $(t,s)\in \Sigma_J$ where
$c_{p,\infty}(t-s)=[c_{2,\infty}(t-s)]^{\frac{2}{p}}$ for any $p>2$.
On the other hand, Theorem \ref{th1} shows that $\|{\bf G}(t,s)\f\|_{L^p(\Rd;\Rm)}\le
c_p(t-s)\|\f\|_{L^p(\Rd;\Rm)}$,
for any $(t,s)\in \Sigma_J$ and $p \ge 2$. Hence, again by interpolation
we deduce that
\[
\|{\bf G}(t,s)\f\|_{L^q(\Rd;\Rm)}\le c_{p,q}(t-s)\|\f\|_{L^p(\Rd;\Rm)}, \qquad\;\, (t,s)\in \Sigma_J
\]
for any $2\le p \le q< +\infty$, where
$c_{p,q}(t-s)=[c_p(t-s)]^{\frac{p}{q}}[c_{p,\infty}(t-s)]^{1-\frac{p}{q}}$.

So, let us prove \eqref{aim}. First, observe that for any $n\in\N$, any $\h\in
C^2(\overline{B_n};\Rm)$, which vanishes on $\partial B_n$, and $\lambda>0$, it holds that
\begin{align*}
\int_{B_n} \langle \lambda \h-\boldsymbol{\mathcal A}^*(s)\h, \h\rangle dx=
&\sum_{i=1}^d\int_{B_n}\langle QD_xh_i,D_xh_i\rangle dx +\lambda
\|\h\|_2^2
+2^{-1} \sum_{i=1}^d\int_{B_n}{\rm Tr}(B_i D_i (\h\otimes \h)) dx
\\
&- \int_{B_n}\bigg\langle\bigg (C-\sum_{i=1}^d  D_iB_i\bigg ) \h,\h\bigg\rangle dx\\
&\ge \nu_0\|D_x \h\|_{L^2(B_n;\Rm)}^2+ \lambda\|\h\|_{L^2(B_n;\Rm)}^2 - \int_{B_n}\bigg\langle
\bigg (C-\frac{1}{2}\sum_{i=1}^d  D_iB_i\bigg ) \h,\h\bigg\rangle dx \\
&\ge    \nu_0\|D_x \h\|_{L^2(B_n;\Rm)}^2+ (\lambda-L_J/2)\|\h\|_{L^2(B_n;\Rm)}^2
\end{align*}
for any $s \in J$, with $L_J$ as in \eqref{p-2-infty}, where $\nu_0$ is the ellipticity bound in Hypotheses 
\ref{hyp_base}(ii). Nash's inequality (see
\cite[Thm. 2.4.6]{Dav89Hea}) together with the latter estimate yield
\begin{equation} \label{nash}
\int_{\Rd} \langle (\lambda -\boldsymbol{\mathcal A}^*(s))\h, \h\rangle dx
\ge c_1 \|\h\|_{W^{1,2}(B_n;\Rm)}^2\ge c_2\|\h\|_{L^2(B_n;\Rm)}^{2+4/d}\|\h\|_{L^1(B_n;\Rm)}^{-4/d}
\end{equation}
for any $\lambda >L_J/2$, $s \in J$ and some positive constants $c_1, c_2$
depending on $\nu_0, L_J$ and $m$.
Now, fix $\g \in C^\infty_c(\Rd;\Rm)$ and $\lambda >L_J/2$. For any $n \in
\N$, such that ${\rm supp}(f)\subset B_n$, we set
\[
v_n(s)=\|e^{-\lambda(t-s)}\G_n^*(t,s)\g\|_{L^2(B_n;\Rm)}^2, \qquad\;\, (t,s)\in \Sigma_J,
\]
where, as in the proof of Theorem \ref{th1}, $\G_n^*(t,s)\g$ denotes the unique classical solution of
\eqref{prob_approx_dual}.
Estimate \eqref{nash} implies
\begin{align}
v_n'(s)= & 2  e^{-2\lambda(t-s)}\int_{\Rd}\langle (\lambda
-\boldsymbol{\mathcal A}^*(s))\G_n^*(t,s)\g,\G_n^*(t,s)\g\rangle dx\notag \\
 \ge &   2 c_2  \|e^{-\lambda(t-s)}\G_n^*(t,s)\g\|_{L^2(B_n;\Rm)}^{2+4/d}
\|e^{-\lambda(t-s)}\G_n^*(t,s)\g\|_{L^1(B_n;\Rm)}^{-4/d}\notag\\
 \ge &  2 c_2 e^{\frac{4}{d}\lambda(t-s)}
\|e^{-\lambda(t-s)}\G_n^*(t,s)\g\|_{L^2(B_n;\Rm)}^{2+4/d}\|\g\|_{L^1(B_n;\Rm)}^{-4/d},
\label{dom-10}
\end{align}
where in the last inequality we have used the estimate
$\|{\bf G}^*_n(t,s)\g\|_{L^1(B_n;\Rm)} \le \|\g\|_{L^1(B_n;\Rm)}$ which holds true for any $\g \in C^\infty_c(\Rd;\Rm)$.
Indeed, the function ${\bf G}^*_n(t,s)\g$ belongs to $L^1(B_n;\R^m)$ and
\begin{align*}
\Big|\int_{B_n}\langle {\bf G}^*_n(t,s)\g,\f\rangle dx \Big| &=
\Big|\int_{B_n}\langle \g,{\bf G}_n(t,s)\f\rangle dx \Big|
\\
&\le \|\g\|_{L^1(B_n;\R^m)}\|{\bf G}_n(t,s)\f\|_{L^{\infty}(B_n;\R^m)}
\\
&\le \|\g\|_{L^1(B_n;\R^m)}\|\f\|_{L^{\infty}(B_n;\R^m)}
\end{align*}
for any $\f\in C_b(B_n;\R^m)$, since the proof of Proposition \ref{prop-appendix} shows that
$\|{\bf G}_n(t,s)\f\|_{L^{\infty}(B_n;\R^m)}\le\|\f\|_{L^{\infty}(B_n;\R^m)}$ for any $t\ge s$.
By approximating any $\f\in L^{\infty}(B_n;\R^m)$ by
a bounded sequence $(\f_n)\subset C_b(B_n;\R^m)$ converging to $\f$ in a dominated way, we conclude that
\[
\Big|\int_{B_n}\langle {\bf G}^*_n(t,s)\g,\f\rangle dx\Big|\leq \|\g\|_{L^1(B_n;\R^m)}\|\f\|_{L^{\infty}(B_n;\R^m)}
\] 
for any such
$\f$. This estimate shows that $\|{\bf G}^*_n(t,s)\g\|_{L^1(B_n;\R^m)}\le \|\g\|_{L^1(B_n;\R^m)}$, as claimed.

{}From \eqref{dom-10} it thus follows that
\[
\frac{d}{ds}[(v_n(s))^{-2/d}]\le - \frac{4c_2}{d}
e^{\frac{4}{d}\lambda(t-s)} \|\g\|_{L^1(B_n;\Rm)}^{-4/d},\qquad\,\, (t,s)\in \Sigma_J,
\]
whence, integrating from $s$ to $t$ and estimating
$\int_s^te^{\frac{4}{d}\lambda(t-r)} dr$ from below by $1$, we get
\[
(v_n(t))^{-2/d}-(v_n(s))^{-2/d}\le -\frac{4c_2}{d} \|\g\|_{L^1(B_n;\Rm)}^{-4/d}.
\]
Consequently,
$v_n(s)=\|e^{-\lambda(t-s)}{\bf G}^*_n(t,s)\g\|_{L^2(B_n;\Rm)}^2\le
d^{d/2}(4c_2)^{-d/2}\|\g\|_{L^1(B_n;\Rm)}^2,$
for any $(t,s)\in \Sigma_J$. Thus, we have established that
\[
\|{\bf G}^*_n(t,s)\g\|_{L^2(B_n;\Rm)} \le c_0 e^{\lambda(t-s)}\|\g\|_{L^1(B_n;\Rm)},
\]
for any $\g \in C_c(\Rd;\Rm)$, $(t,s)\in \Sigma_J$, $\lambda \ge L_J/2 $
and $c_0:=d^{d/4}(4c_2)^{-d/4}$. By duality, the latter inequality leads
to
\begin{align}  \label{app}
\|{\bf G}_n(t,s)\f\|_{\infty}  &= \sup\left\{\int_{\Rd}\langle \f , {\bf G}^*_n(t,s)\g\rangle dx: \g \in
C^\infty_c(B_n;\Rm),\ \|\g\|_{L^1(B_n;\Rm)} \le 1\right\}
\\ \nonumber
& \le c_0 e^{\lambda(t-s)}\|\f\|_{L^2(B_n;\Rm)}
\end{align}
for any $(t,s)\in \Sigma_J$. Letting $n \to +\infty$ in \eqref{app} yields
estimate \eqref{aim} with $c_{2,\infty}(t-s)=c_0 e^{\lambda(t-s)}$. \\
(ii) The second part of the statement can be easily obtained arguing again
by interpolation as in (i). In this case, since
$\|{\bf G}(t,s)\f\|_{L^p(\Rd;\Rm)}\le c_p(t-s)\|\f\|_{L^p(\Rd;\Rm)}$, for any $(t,s)\in \Sigma_J$
and $p \in [1,2]$, it is enough to prove that
\begin{equation} \label{aim_1-2}
\|{\bf G}(t,s)\f\|_{L^2(\Rd;\Rm)}\le c_{1,2}(t-s)\|\f\|_{L^1(\Rd;\Rm)}, \qquad\;\, (t,s)\in \Sigma_J,
\end{equation}
Once \eqref{aim_1-2} is proved, using Riesz-Thorin theorem and interpolating between \eqref{est_p}, with $p=2$,
and \eqref{aim_1-2}, we get \eqref{iper}
with $q=2$. Next, interpolating between this latter estimate and, again, \eqref{est_p}, we get \eqref{iper} for any $1\le p<q\le 2$, with
$c_{p,q}(r)=(c_p(r))^{\frac{2-q}{q(2-p)}}(c_{1,2}(r))^{\frac{2(q-p)}{pq}}$. Finally, splitting
${\bf G}(t,s)={\bf G}(t,(t+s)/2){\bf G}((t+s)/2,s)$, we get \eqref{iper} with $p<2<q$ and $c_{p,q}(r)=c_{p,2}(r/2)c_{2,q}(r/2)$.

The proof of \eqref{aim_1-2} can be obtained arguing as in (i) replacing
the function $v_n$ defined there by the function
$u_n(t)=\|e^{-\lambda(t-s)}{\bf G}(t,s)\g\|_{L^2(\Rd;\R^m)}^2$ for any $(t,s)\in \Sigma_J$.
\end{proof}

Theorem \ref{thm_hyper} can now be used to prove that the hypercontractivity estimate \eqref{iper} holds true also when
Hypotheses \ref{uni2} are satisfied, see also Remark \ref{alpha}.

\begin{thm}
Let us assume that Hypotheses $\ref{uni2}$ hold true and that for some interval $J\subset I$ there exist a
positive constant $\lambda_J$ and two functions $\kappa_J:J\times\Rd\to\R$, bounded from above, and
$\varphi_J\in C^2(\R^d)$, blowing up as $|x|\to +\infty$, such that ${\rm div}_x b+\kappa_J\ge 0$,
in $J \times \Rd$ and $\sup_{J\times\R^d}(\tilde{\A}\varphi_J-\lambda\varphi_J)<+\infty$,
where $\tilde{\A}={\rm div}(Q D_x)- \langle b,D_x\rangle+2\kappa_J$. Then, ${\bf G}(t,s)$ maps $L^p(\Rd;\Rm)$ into $L^q(\Rd;\Rm)$
for any $1+ \frac{1}{4\beta} \le p \le q \le +\infty$. Moreover,
$\|{\bf G}(t,s)\f\|_{L^q(\Rd;\Rm)}\le \tilde{c}_{p,q}(t-s)\|\f\|_{L^p(\Rd;\Rm)}$
for any $(t,s)\in \Sigma_J$, $1+\frac{1}{4\beta} \le p \le q \le +\infty$ and some function
${\tilde c}_{p,q}:(0,+\infty)\to (0,+\infty)$.
\end{thm}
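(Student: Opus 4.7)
The plan is to reduce the vector-valued hypercontractivity estimate to the hypercontractivity of the scalar evolution operator $G(t,s)$ associated with the operator $\A$ from \eqref{defA}, by exploiting the pointwise estimate \eqref{pointwise} of Proposition \ref{stimapuntLp}. The key idea is that once we control $|\G(t,s)\f|^p$ by $G(t,s)|\f|^p$, pushing an $L^p(\Rd;\Rm)\to L^q(\Rd;\Rm)$ inequality onto $\G(t,s)$ is the same thing as pushing an $L^1(\Rd)\to L^{q/p}(\Rd)$ inequality onto $G(t,s)$, which is exactly the content of Theorem \ref{thm_hyper}.

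First I would verify that the scalar operator $\A$ satisfies the assumptions required in Theorem \ref{thm_hyper}. In the scalar case $m=1$, every matrix $B_i$ reduces to the scalar $b_i$ and the potential is zero, so for $\eta\in\partial B_1=\{\pm 1\}$ the bracket defining $\mathcal K_\eta$ vanishes identically and $\mathcal K_\eta \equiv 0$; thus Hypothesis \ref{uni1}(i) trivially holds. Hypothesis \ref{uni1}(ii) coincides with the existence of a Lyapunov function for $\A$, which is granted by \ref{uni2}(iii). The hypotheses of \ref{uni11} are also met: with $m=1$ one has $\tilde{\mathcal K}_\eta=4(\diver_x b+\kappa_J)\ge 0$ by assumption, and the required Lyapunov condition for $\tilde{\A}_\eta=\diver(QD_x)-\langle b,D_x\rangle+2\kappa_J$ is precisely the one prescribed in the statement. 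Finally, condition \eqref{p-2-infty} for the scalar operator reads $-\diver_x b\le L_J$, which follows from $\diver_x b+\kappa_J\ge 0$ and the upper bound on $\kappa_J$. Consequently Theorem \ref{thm_hyper}(ii) applies to $G(t,s)$ and yields, for every $1\le p'\le q'\le +\infty$, an estimate
\begin{equation*}
\|G(t,s)g\|_{L^{q'}(\Rd)}\le c^{\text{sc}}_{p',q'}(t-s)\|g\|_{L^{p'}(\Rd)},\qquad (t,s)\in\Sigma_J.
\end{equation*}

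Next I would combine this scalar estimate with \eqref{pointwise}. For $p\in [1+\tfrac{1}{4\beta},+\infty)$, $q\in [p,+\infty)$ and $\f\in C_c(\Rd;\Rm)$, raising \eqref{pointwise} to the $q/p$-th power and integrating gives
\begin{equation*}
\|\G(t,s)\f\|_{L^q(\Rd;\Rm)}^p
=\bigl\||\G(t,s)\f|^p\bigr\|_{L^{q/p}(\Rd)}
\le e^{pK_J(t-s)}\bigl\|G(t,s)|\f|^p\bigr\|_{L^{q/p}(\Rd)}.
\end{equation*}
Applying the scalar hypercontractivity with $p'=1$ and $q'=q/p$ to the nonnegative function $|\f|^p\in L^1(\Rd)$, and extracting the $p$-th root, I obtain
\begin{equation*}
\|\G(t,s)\f\|_{L^q(\Rd;\Rm)}\le e^{K_J(t-s)}\bigl(c^{\text{sc}}_{1,q/p}(t-s)\bigr)^{1/p}\|\f\|_{L^p(\Rd;\Rm)},
\end{equation*}
which is the desired estimate with $\tilde c_{p,q}(r):=e^{K_Jr}(c^{\text{sc}}_{1,q/p}(r))^{1/p}$. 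The case $q=+\infty$ is handled in the same manner by taking the $L^\infty$-norm of \eqref{pointwise} and using the scalar bound $\|G(t,s)g\|_{\infty}\le c^{\text{sc}}_{1,\infty}(t-s)\|g\|_{L^1(\Rd)}$ from Theorem \ref{thm_hyper}(ii).

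Finally I would extend the estimate from $C_c(\Rd;\Rm)$ to all of $L^p(\Rd;\Rm)$ via the density argument already used in Proposition \ref{def_Lp}; since $\G(t,s)$ is strongly continuous on $L^p(\Rd;\Rm)$ under Hypotheses \ref{uni2}, the pointwise estimate derived for compactly supported data transfers by Fatou's lemma and by continuity of $\G(t,s)$ on the appropriate $L^p$-space. The only non-routine verification I expect is the scalar reformulation of Hypotheses \ref{uni1} and \ref{uni11} for $\A$: it is just a matter of setting $m=1$, but the bookkeeping (in particular the way $\kappa_J$ plays the role of the auxiliary $\kappa$ in \ref{uni11}, and the fact that $\mathcal K_\eta$ trivializes when $m=1$) is where care is needed; once this is done, the rest is a one-line consequence of \eqref{pointwise} and Theorem \ref{thm_hyper}(ii).
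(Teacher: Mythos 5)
Your proposal is correct and follows essentially the same route as the paper's proof: apply the scalar hypercontractivity estimate from Theorem \ref{thm_hyper}(ii) to the auxiliary operator $\A$ of \eqref{defA}, and combine it with the pointwise domination \eqref{pointwise} of Proposition \ref{stimapuntLp} to transfer the $L^1\to L^{q/p}$ bound on $G(t,s)$ into an $L^p\to L^q$ bound on $\G(t,s)$. The only difference is that the paper simply asserts that the assumptions of Theorem \ref{thm_hyper}(ii) hold for $\A$, whereas you spell out the (straightforward) verification that in the scalar case $\mathcal K_\eta\equiv 0$, $\A_\eta=\A$, $\tilde{\mathcal K}_\eta=4(\diver_x b+\kappa_J)$, $\tilde{\A}_\eta=\tilde{\A}$, and that \eqref{p-2-infty} reduces to $-\diver_x b\le \kappa_0$; this bookkeeping is accurate and a useful complement to the paper's terse argument.
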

\begin{proof}
Note that all the assumptions of Theorem \ref{thm_hyper}(ii) are satisfied by the scalar operator $\A$ in \eqref{defA}.
As a consequence, the evolution operator $G(t,s)$ associated with $\A$ satisfies \eqref{iper} for any $p,q$ as in the statement. In particular
$G(t,s)$ maps $L^1(\Rd)$ into $L^{q/p}(\Rd)$ and
\begin{equation}\label{iper_scalar}
\|G(t,s)\psi\|_{L^{q/p}(\Rd)}\le c_{1,q/p}(t-s)\|\psi\|_{L^1(\Rd)},
\qquad\,\; (t,s)\in \Sigma_J,\,\psi \in L^1(\Rd).
\end{equation}
Therefore, from \eqref{pointwise} and \eqref{iper_scalar} it follows that
\begin{align*}
\|{\bf G}(t,s)\f\|_{L^q(\Rd;\Rm)}^q &=\int_{\Rd}|{\bf G}(t,s)\f|^q dx \le e^{qK_p(t-s)/p}\int_{\Rd}(G(t,s)|\f|^p)^{q/p} dx
\\
& \le e^{qK_p(t-s)/p} [c_{1,q/p}(t-s)]^{q/p}\||\f|^p\|_{L^1(\Rd;\Rm)}^{q/p}
\\
&=e^{qK_p(t-s)/p}[c_{1,q/p}(t-s)]^{q/p} \|\f\|_{L^p(\Rd;\Rm)}^q
\end{align*}
for any $\f\in C_c(\Rd;\Rm)$ and $(t,s)\in \Sigma_J$. The density of $C_c(\Rd; \Rm)$ in $L^p(\Rd; \Rm)$ allows us to obtain the claim with
$\tilde{c}_{p,q}(r)= e^{K_pr/p}[c_{1,q/p}(r)]^{1/p}$, $r \ge 0$.
\end{proof}

\section{Pointwise gradient estimates}

In this section we prove some gradient estimates satisfied by the evolution operator ${\bf G}(t,s)\f$ when $\f \in C^\infty_c(\Rd;\Rm)$
when Hypotheses \ref{uni2} are satisfied. Notice that $p>1$ could be allowed in all the results if $\beta$ is arbitrary in \eqref{HpJ}, 
according to Remark \ref{alpha}. We also add the following assumptions.

\noindent
\begin{hyp}\label{gra_est}
There exist $\gamma\ge 1/4$ and a function $k$ such that
$|D_xq_{ij}|\le k\lambda_Q$ in $I\times \Rd$ for any $i,j=1, \ldots,d$ and
\begin{equation}\label{fin1}
\sup_{J \times \Rd} \bigg[\sqrt{d}m\xi\lambda_Q+\bigg(\sum_{i=1}^d |D_i C|^2\bigg)^{\frac{1}{2}}
+2\Lambda_C\bigg] < +\infty
\end{equation}
\begin{equation}\label{fin2}
\sup_{J \times \Rd} \bigg [\sqrt{d}\bigg (\sum_{i,j,l=1}^d|D_{il}q_{ij}|^2\bigg )^{\frac{1}{2}}
+\bigg (\sum_{i,j=1}^d |D_j\tilde{B}_i|^2\bigg )^{\frac{1}{2}}+\Lambda_{D_x b} + \Lambda_C +
M_{\gamma}\lambda_Q+\frac{1}{2}\bigg (\sum_{i=1}^d |D_i C|^2\bigg )^{\frac{1}{2}}\bigg ]<+\infty
\end{equation}
where $M_{\gamma}:=\gamma(\sqrt{d}m\xi+dk)^2+\frac{1}{2}\sqrt{d}m\xi+\frac{1}{4\gamma}$ (see Hypotheses $\ref{uni2}$).
\end{hyp}

\begin{thm}
\label{thm-avvvooooccato}
Assume that Hypotheses $\ref{uni2}$ (with $\sigma=1$) and Hypotheses $\ref{gra_est}$ are satisfied.
Then, for any $p\geq 1+\frac{1}{4(\beta\wedge\gamma)}$,
\begin{equation}\label{stima_grad1}
|D_x {\bf G}(t,s)\f|^p \le c_p\, e^{C_{p,J}(t-s)}G(t,s)(|\f|^p+|D\f|^p)
\end{equation}
for any $(t,s) \in \Sigma_J$, $\f \in C^\infty_c(\Rd;\Rm)$ and some positive constants $c_p$ and $C_{p,J}$, where $G(t,s)$ is
the evolution operator associated with $\A(t)$ in $C_b(\Rd)$.
\end{thm}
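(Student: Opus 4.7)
The plan is to adapt the Bernstein-type computation of Proposition \ref{stimapuntLp}, but now applied to the full first-jet quantity $w_\varepsilon:=|\uu|^2+|D_x\uu|^2+\varepsilon$ with $\uu:={\bf G}(\cdot,s)\f$, aiming at the differential inequality
\[
D_tv_\varepsilon-\A v_\varepsilon\le C_{p,J}\,v_\varepsilon,\qquad v_\varepsilon:=w_\varepsilon^{p/2},
\]
valid on any bounded interval $J\subset I$ for some constant $C_{p,J}>0$. Once this is in place, the maximum principle in \cite[Prop. 2.1]{KunLorLun09Non} applied to $e^{-C_{p,J}(t-s)}v_\varepsilon(t,\cdot)-G(t,s)(|\f|^2+|D\f|^2+\varepsilon)^{p/2}$, which is non-positive at $t=s$, yields $v_\varepsilon(t,\cdot)\le e^{C_{p,J}(t-s)}G(t,s)(|\f|^2+|D\f|^2+\varepsilon)^{p/2}$; letting $\varepsilon\to 0^+$ and using $|D_x\uu|^p\le(|\uu|^2+|D_x\uu|^2)^{p/2}$ together with $(a^2+b^2)^{p/2}\le c_p(|a|^p+|b|^p)$ delivers \eqref{stima_grad1}. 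To legitimately differentiate, I would first carry out the argument on the Dirichlet approximants $\uu_n:={\bf G}_n^{\mathcal D}(\cdot,s)\f$ on $B_n$ for $\f\in C^\infty_c(\Rd;\Rm)$ supported in $B_n$, which lie in $C^{1,2}((s,+\infty)\times\overline{B_n};\Rm)$ by Schauder theory, and then pass to the limit $n\to+\infty$ using the $C^{1,2}_{\rm loc}$ convergence of Proposition \ref{recall}; the Lyapunov function $\varphi_J$ of Hypothesis \ref{uni2}(iii) handles the growth at infinity in the maximum-principle step.

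The core is the computation of $D_tw_\varepsilon-\A w_\varepsilon$. Using the scalar identity $\A|\vv|^2=2\langle\vv,\A\vv\rangle+2\sum_h\langle QDv_h,Dv_h\rangle$ applied to $\vv=\uu$ and to each $\vv=D_i\uu$, together with the commutator formula
\[
\boldsymbol{\mathcal A}(D_i\uu)-D_i(\boldsymbol{\mathcal A}\uu)=-\diver(D_iQ\cdot D\uu)-\sum_h(D_iB_h)D_h\uu-(D_iC)\uu,
\]
one arrives at an expression for $D_tw_\varepsilon-\A w_\varepsilon$ consisting of (i) the coupling terms already handled in Proposition \ref{stimapuntLp}, namely $2\sum_i\langle\uu,\tilde B_iD_i\uu\rangle+2\langle C\uu,\uu\rangle$; (ii) their gradient analogues $2\sum_{i,j}\langle D_i\uu,\tilde B_jD_jD_i\uu\rangle+2\sum_i\langle CD_i\uu,D_i\uu\rangle$; (iii) the commutator corrections involving $D_iQ$, $D_iB_h$ and $D_iC$; minus (iv) the two negative quadratic forms $2\sum_h\langle QDu_h,Du_h\rangle$ and $2\sum_{i,h}\langle QDD_iu_h,DD_iu_h\rangle$ that serve as the engines for absorbing the bad terms. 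Young's inequality is then used to split the commutator and mixed products of type $|D_xq|\cdot|D_x^2\uu|\cdot|D_x\uu|$ and $|D_xB_h|\cdot|D_x\uu|^2$: the Hessian-quadratic piece is absorbed into (iv) using $|D_xq_{ij}|\le k\lambda_Q$ (recall $\sigma=1$, so $\lambda_Q^{2\sigma-1}=\lambda_Q$), with Young parameter $\gamma$; the residual, of size $\lambda_Q|D_x\uu|^2+|\uu|^2$, is then dominated by $M_\gamma\lambda_Q w_\varepsilon$ by the very definition of $M_\gamma$ in Hypotheses \ref{gra_est}, while the contributions of $C$ and $D_iC$ are bounded via \eqref{fin1}--\eqref{fin2}.

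The remaining ingredient is the chain-rule excess $-\tfrac{p(p-2)}{4}w_\varepsilon^{p/2-2}\langle QD_xw_\varepsilon,D_xw_\varepsilon\rangle$ produced by $v_\varepsilon=w_\varepsilon^{p/2}$. For $p\ge 2$ this term has the right sign and is simply dropped. For $p\in[1+\tfrac{1}{4(\beta\wedge\gamma)},2)$ the Cauchy--Schwarz inequality of \eqref{star}, applied now to the augmented family $\{u_h,D_iu_h\}_{i,h}$, gives
\[
\langle QD_xw_\varepsilon,D_xw_\varepsilon\rangle\le 4w_\varepsilon\Bigl(\textstyle\sum_h\langle QDu_h,Du_h\rangle+\sum_{i,h}\langle QDD_iu_h,DD_iu_h\rangle\Bigr),
\]
so this term is absorbed into (iv) precisely under the hypothesis $p-1\ge\tfrac{1}{4(\beta\wedge\gamma)}$. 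I expect the main obstacle to be the careful bookkeeping of all Young parameters involved in absorbing (iii) into (iv): the weights must be tuned so that Hypotheses \ref{gra_est} are exactly sufficient, and so that both the constant $M_\gamma$ and the threshold $1+\frac{1}{4(\beta\wedge\gamma)}$ emerge naturally and simultaneously from the scalar ($|\uu|^2$) and gradient ($|D_x\uu|^2$) contributions. Modulo this accounting, the argument is a controlled, if lengthy, extension of the proof of Proposition \ref{stimapuntLp}.
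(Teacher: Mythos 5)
Your Bernstein-type computation of the differential inequality for $v_\varepsilon=(|\uu|^2+|D_x\uu|^2+\varepsilon)^{p/2}$ matches the paper's, including the splitting into coupling, gradient-coupling, commutator, and negative Dirichlet-form terms, and the treatment of the chain-rule excess via \eqref{star} on the augmented family $\{u_h,D_iu_h\}$ for $p<2$. However, there is a genuine gap in the approximation scheme: you work with the Cauchy--Dirichlet approximants $\uu_n=\G_n^{\mathcal D}(\cdot,s)\f$, but the function $v_n=(|\uu_n|^2+|D_x\uu_n|^2+\varepsilon)^{p/2}$ does \emph{not} have favorable boundary behavior under Dirichlet conditions. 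On $\partial B_n$ one has $\uu_n={\bf 0}$, but the normal derivative of $\uu_n$ is generally nonzero, so $v_n=(|D_x\uu_n|^2+\varepsilon)^{p/2}>0$ there, with no usable sign for $\partial v_n/\partial\nu$. Consequently the comparison function $w_n=v_n-e^{C_{p,J}(t-s)}G_n^{\mathcal D}(t,s)(|\f|^2+|D\f|^2+\varepsilon)^{p/2}$ is strictly positive on the lateral boundary (the second term vanishes there under Dirichlet conditions), and the maximum principle cannot be closed.

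The paper avoids this by using the Cauchy--Neumann approximants $\uu_n=\G_n^{\mathcal N}(\cdot,s)\f$ and the scalar Neumann evolution $G_n^{\mathcal N}(t,s)$. Under Neumann boundary conditions the normal derivative of each $|D_xu_{n,k}|^2$ is nonpositive on $\partial B_n$ (a classical geometric fact for balls, see \cite{BerFor04Gra,BerForLor07Gra}), hence $\partial v_n/\partial\nu\le 0$ and one may apply the \emph{classical} maximum principle on the bounded cylinder $(s,T]\times B_n$ without any Lyapunov function. One then passes to the limit $n\to+\infty$ using the $C^{1,2}_{\rm loc}$ convergence of Proposition \ref{recall}. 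Your proposal also mixes in the unbounded-domain maximum principle of \cite[Prop. 2.1]{KunLorLun09Non} with the Lyapunov function of Hypothesis \ref{uni2}(iii); that would require an a priori bound on $|D_x\uu|$ in $L^\infty$ on strips $[s,T]\times\Rd$ to verify the admissibility of $v_\varepsilon$, which you have not established and which is precisely what the theorem is meant to deliver. Switching from Dirichlet to Neumann approximants and keeping the comparison argument entirely on the bounded domains $B_n$ resolves both issues simultaneously.
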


\begin{proof}
>From \cite[Prop. 2.4]{KunLorLun09Non} it follows that  $|G(t,s)\psi|^p\le G(t,s)|\psi|^p$,
for any $\psi\in C_b(\Rd)$, $t \ge s \in I$ and $p\in [1,+\infty)$.
Thus, it suffices to prove the claim only for $p \in [1+\frac{1}{4(\beta\wedge\gamma)},2]$.
Let $J$ be as in Hypotheses \ref{gra_est}, $\f \in C^\infty_c(\Rd;\Rm)$ and for large $n\in\N$, we consider
 the classical solution $\uu_n=\G_n^{\mathcal N}(\cdot,s)\f$ of the Cauchy-Neumann
problem \eqref{prob_approx_Neu}. The core of the proof consists in proving that
\begin{equation}\label{stima_grad1_appr}
|D_x \uu_n(t,\cdot)|^p \le e^{C_{p, J}(t-s)}G_n^{\mathcal N}(t,s)(|\f|^2+|D\f|^2)^{\frac{p}{2}}
\end{equation}
for any $(t,s)\in \Sigma_J$, $\f \in C^\infty_c(\Rd;\Rm)$, $p \in [1+\frac{1}{4(\beta\wedge\gamma)},2]$ and some
positive constant $C_{p,J}$. Here, $G_n^{\mathcal N}(t,s)$ denotes the evolution operator associated with the
restriction of $\A(t)$ (see \eqref{defA}) to $B_n$, with homogeneous Neumann boundary conditions.
Indeed, once \eqref{stima_grad1_appr} is proved, estimate \eqref{gra_est} follows, from Proposition \ref{recall},
with $c_p= 2^{(p/2-1)\vee 0}$.

So, let us prove \eqref{stima_grad1_appr}. For any $\varepsilon >0$, let us consider the function
$v_n= (|\uu_n|^2+|D_x \uu_n|^2+\varepsilon)^{\frac{p}{2}}$.
>From \cite[Thm. IV.5.5]{LadSolUra68Lin} it follows that
$v_n \in C^{1,2}([s,+\infty)\times \Rd)\cap C_b([s,T]\times \Rd)$ for any $T>s$. Moreover, $v_n$ solves the problem
\begin{equation}\label{pro_scal}
\left\{\begin{array}{ll}
D_t v_n- \A(t)v_n=pv_n^{1-2/p}\bigg (\displaystyle\sum_{i=1}^5\psi_i+(2-p)v_n^{-2/p}\psi_6\bigg ), \qquad\,\,  &(s,+\infty)\times B_n,\\
\displaystyle\frac{\partial v_n}{\partial \nu} \le 0\qquad\,\,  &(s,+\infty)\times \partial B_n,\\[1.6mm]
v_n(s)= (|\f|^2+|D_x \f|^2+\varepsilon)^{p/2} \qquad\,\,  & B_n,
\end{array}
\right.
\end{equation}
where
\begin{align*}
\psi_1= &\sum_{i,j,l=1}^d\sum_{k=1}^m D_{li}q_{ij}D_lu_{n,k}D_j u_{n,k}+\sum_{i,l=1}^d\sum_{k,j=1}^m D_l(\tilde{B}_i)_{kj}D_lu_{n,k}D_i u_{n,j}
\\
&+\sum_{j=1}^m\langle D_x bD_x u_{n,j},D_xu_{n,j}\rangle+ \sum_{i=1}^d\langle CD_i \uu_n,D_i \uu_n \rangle,
\\[1mm]
\psi_2= &\sum_{i,j,l=1}^d\sum_{k=1}^m D_lq_{ij}D_{ij} u_{n,k}D_lu_{n,k}+\sum_{i,l=1}^d\sum_{k,j=1}^m (\tilde{B}_i)_{kj}D_{li} u_{n,j}D_lu_{n,k},
\\[1mm]
\psi_3= &\sum_{i=1}^d\langle \uu_n, \tilde{B}_i D_i \uu_n\rangle +\sum_{l=1}^d\sum_{k,j=1}^m D_l C_{kj}u_{n,j}D_l u_{n,k},
\\[1mm]
\psi_4= &\langle C\uu_n,\uu_n \rangle,
\\[1mm]
\psi_5= &-\sum_{k=1}^m \langle Q D_x u_{n,k}, D_x u_{n,k}\rangle
- \sum_{i=1}^d\sum_{k=1}^m\langle Q D_x D_i u_{n,k}, D_x D_i u_{n,k}\rangle,
\\[1mm]
\psi_6=&-\sum_{i,j=1}^dq_{ij}\bigg (\langle\uu,D_i\uu\rangle+\sum_{l=1}^d\langle D_{il}\uu,D_l\uu\rangle\bigg )
\bigg (\langle\uu,D_j\uu\rangle+\sum_{m=1}^d\langle D_{jm}\uu,D_m\uu\rangle\bigg )
\end{align*}
and the boundary condition in \eqref{pro_scal} follows since the normal derivative of
$|D_x u_{n,k}|^2$ is nonpositive in $(s,+\infty)\times\partial B_n$
for any $k=1, \ldots,m$ (see e.g., \cite{BerFor04Gra,BerForLor07Gra}).

Using Hypotheses $\ref{uni2}$(i)-(ii) and the inequality $|D_xq_{ij}|\le k\lambda_Q$, we get the following
estimates for the functions $\psi_i$, for $i=1,2,3$:
\begin{align*}
\psi_1 \le& \bigg [\sqrt{d}\bigg(\sum_{i,j,l=1}^d |D_{li}q_{ij}|^2\bigg)^{1/2}
+\bigg(\sum_{i,l=1}^d |D_l\tilde{B}_i|^2\bigg)^{1/2}
+\Lambda_{D_x b}+ \Lambda_C\bigg ]|D_x \uu_n|^2\\[1mm]
\psi_2 \le& \bigg [\bigg (\sum_{i=1}^d |D_iQ|^2\bigg )^{1/2}
+\bigg [\bigg (\sum_{i=1}^d |\tilde{B}_i|^2\bigg )^{1/2}\bigg ] |D_x \uu_n||D_x^2 \uu_n|
\le a(dk +\sqrt{d}m\xi)^2 \lambda_Q |D_x^2 \uu_n|^2+ \frac{1}{4a}\lambda_Q |D_x \uu_n|^2,
\\[1mm]
\psi_3 \le &\frac{1}{2}\bigg [\sqrt{d} m \xi\lambda_Q + \bigg (\sum_{i=1}^d |D_i C|^2\bigg )^{1/2}\bigg ](|\uu_n|^2+|D_x \uu_n|^2)
\end{align*}
in $J \times \Rd$. To estimate $\psi_6$, we observe that
\begin{align*}
\psi_6=&\sum_{h,k=1}^m\sum_{i,j=1}^dq_{ij}\bigg (u_{n,h}D_iu_{n,h}+\sum_{l=1}^dD_{il}u_{n,h}D_lu_{n,h}\bigg )
\bigg (u_{n,k}D_ju_{n,k}+\sum_{m=1}^dD_{jm}u_{n,k}D_mu_{n,k}\bigg )\\
=&\sum_{h,k=1}^mu_{n,h}u_{n,k}\langle QD_xu_{n,h},D_xu_{n,k}\rangle+2\sum_{h,k=1}^mu_{n,h}
\sum_{l=1}^dD_lu_{n,k}\langle QD_xu_{n,h},D_xD_lu_{n,k}\rangle\\
&+\sum_{h,k=1}^m\sum_{l,m=1}^dD_lu_{n,h}D_mu_{n,k}\langle QD_xD_lu_{n,h},D_xD_mu_{n,k}\rangle.
\end{align*}
It thus follows that
\begin{align*}
\psi_6\le &\bigg (\sum_{h=1}^m|u_{n,h}||Q^{1/2}D_xu_{n,h}|\bigg )^2+
2\sum_{h,k=1}^m|u_{n,h}||Q^{1/2}D_xu_{n,h}|\sum_{l=1}^d|D_lu_{n,k}||Q^{1/2}D_xD_lu_{n,k}|
\\
&+\sum_{h,k=1}^m\sum_{l,m=1}^d|D_lu_{n,h}||D_mu_{n,k}||Q^{1/2}D_xD_lu_{n,h}||Q^{1/2}D_xD_mu_{n,k}|
\\
\le & |\uu_n|^2\sum_{k=1}^d\langle QD_xu_{n,k},D_xu_{n,k}\rangle
\\
&+2|\uu_n||D_x\uu_n|
\bigg (\sum_{k=1}^d\langle QD_xu_{n,k},D_xu_{n,k}\rangle\bigg )^{\frac{1}{2}}
\bigg (\sum_{i=1}^d\sum_{k=1}^m\langle QD_xD_iu_{n,k},D_xD_iu_{n,k}\rangle\bigg )^{\frac{1}{2}}
\\
&+|D_x\uu_n|^2\sum_{i=1}^d\sum_{k=1}^m\langle QD_xD_iu_{n,k},D_xD_iu_{n,k}\rangle
\\
=& \bigg [|\uu_n|\bigg (\sum_{k=1}^d\langle QD_xu_{n,k},D_xu_{n,k}\rangle\bigg )^{\frac{1}{2}}+
|D_x\uu_n|\bigg (\sum_{i=1}^d\sum_{k=1}^m\langle QD_xD_iu_{n,k},D_xD_iu_{n,k}\rangle\bigg )\bigg ]^2
\\
\le & (|\uu_n|^2+|D_x\uu_n|^2)\bigg (\sum_{k=1}^d\langle QD_xu_{n,k},D_xu_{n,k}\rangle+
\sum_{i=1}^d\sum_{k=1}^m\langle QD_xD_iu_{n,k},D_xD_iu_{n,k}\rangle\bigg )
\\
\le &v^{\frac{2}{p}}_n\bigg (\sum_{k=1}^d\langle QD_xu_{n,k},D_xu_{n,k}\rangle+
\sum_{i=1}^d\sum_{k=1}^m\langle QD_xD_iu_{n,k},D_xD_iu_{n,k}\rangle\bigg ).
\end{align*}
Putting everything together, we get
\begin{align*}
\sum_{i=1}^5\psi_i+(2-p)\psi_6v_n^{-2/p}\le &\bigg [\sqrt{d}\bigg (\sum_{i,j,l=1}^d|D_{il}q_{ij}|^2\bigg )^{1/2}
+\bigg(\sum_{i,j=1}^d |D_j\tilde{B}_i|^2\bigg )^{1/2}+\Lambda_{D_xb}+\Lambda_C
\\
&+\bigg (\frac{1}{4a}+p-1+\frac{1}{2}\sqrt{d}m\xi\bigg )\lambda_Q
+\frac{1}{2}\bigg(\sum_{i=1}^d |D_i C|^2\bigg)^{1/2}\bigg ]|D_x\uu_n|^2\\
&+[a(dk+\sqrt{d}m\xi)^2-(1-p)]\lambda_Q|D_x^2\uu_n|^2
\\
&+\bigg \{\Lambda_C+\frac{1}{2}\bigg [\sqrt{d} m \xi\lambda_Q + \bigg(\sum_{i=1}^d |D_i C|^2\bigg )^{1/2}\bigg ]\bigg\}|\uu_n|^2
\end{align*}
for any $a=a(t)$ and, choosing $a =(p-1)(dk +\sqrt{d}m\xi)^{-2}$, we conclude that
\begin{align*}
\sum_{i=1}^5\psi_i+(2-p)\psi_6v_n^{-2/p}\le & \bigg [\sqrt{d}\bigg (\sum_{i,j,l=1}^d|D_{il}q_{ij}|^2\bigg )^{1/2}
+\bigg(\sum_{i,j=1}^d |D_j\tilde{B}_i|^2\bigg )^{1/2}+\Lambda_{D_x b}+\Lambda_C +M_{\gamma}\lambda_Q
\\
&+ \frac{1}{2}\bigg (\sum_{i=1}^d |D_i C|^2\bigg )^{1/2} \bigg ]|D_x \uu_n|^2
\\
&+\bigg [\frac{1}{2}\sqrt{d}m \xi\lambda_Q+\frac{1}{2}\bigg(\sum_{i=1}^d |D_i C|^2\bigg )^{1/2}
+\Lambda_C\bigg]|\uu_n|^2
\end{align*}
in $J\times \Rd$.
Using estimates \eqref{fin1} and \eqref{fin2} we conclude that $D_t v_n- \A(t)v_n\le C_{p,J}v_n$ in $J\times \Rd$
for some positive constant $C_{p,J}$. Hence, the function
$w_n(t,\cdot)= v_n(t,\cdot)- e^{C_{p,J}(t-s)}G_n^{\mathcal N}(t,s)(|\f|^2+|D\f|^2+\varepsilon)^{p/2}$ solves the problem
\begin{equation*}
\left\{\begin{array}{ll}
D_t w_n-( \A(t)+C_{p,J})w_n \le 0, \qquad\,\,  &(s,T]\times B_n,\\[1mm]
\displaystyle\frac{\partial w_n}{\partial \nu} \le 0,\qquad\,\,  &(s,T]\times \partial B_n,\\[1.5mm]
w_n(s)=0, \qquad\,\,  & B_n.
\end{array}
\right.
\end{equation*}
The classical maximum principle yields that $w_n\le 0$ in $(s,T)\times B_n$,
whence, letting $\varepsilon \to 0^+$, estimate \eqref{stima_grad1_appr} follows at once.
\end{proof}

\begin{thm}\label{Lp-w1p-thm}
Assume that Hypotheses \ref{uni2} (with $\sigma=1$) and Hypotheses \ref{gra_est} are satisfied with $J=I$.
If $\Lambda_C \le -2\gamma dm^2 \xi^2\lambda_Q$ in $I \times \Rd$, where $\gamma$ is as in Hypotheses $\ref{gra_est}$, then the estimate
\begin{equation}\label{lp-w1p}
|D_x {\bf G}(t,s)\f|^p\le k_pe^{h_p(t-s)}(t-s)^{-\frac{p}{2}}G(t,s)|\f|^p,
\end{equation}
holds in $\Sigma_I\times\Rd$, for any $p \in [1+\frac{1}{4(\beta \wedge \gamma)},+\infty)$, $\f\in C_c^\infty(\Rd,\Rm)$
and some positive constants $k_p$ and $h_p$.
\end{thm}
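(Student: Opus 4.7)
The strategy is a parabolic Bernstein-type argument applied to a suitable auxiliary function. Fix $p\ge 1+\frac{1}{4(\beta\wedge\gamma)}$, $\f\in C^\infty_c(\Rd;\Rm)$ and $T_0>0$; for $n$ large enough that $\mathrm{supp}(\f)\subset B_n$ and for $s\in I$ with $[s,s+T_0]\subset I$, let $\uu_n:=\G_n^{\mathcal N}(\cdot,s)\f$ be the classical solution of the Cauchy--Neumann problem \eqref{prob_approx_Neu}. For $\alpha,\varepsilon>0$ to be chosen, I would work with
\[
v_n(t,x):=\bigl[\alpha|\uu_n(t,x)|^2+(t-s)|D_x\uu_n(t,x)|^2+\varepsilon\bigr]^{p/2},
\]
which by \cite[Thm.~IV.5.5]{LadSolUra68Lin} lies in $C^{1,2}([s,s+T_0]\times\overline{B_n})\cap C_b([s,s+T_0]\times\overline{B_n})$, satisfies $v_n(s,\cdot)=(\alpha|\f|^2+\varepsilon)^{p/2}$, and has $\partial v_n/\partial\nu\le 0$ on $\partial B_n$ since $\partial|\uu_n|^2/\partial\nu=0$ and $\partial|D_xu_{n,k}|^2/\partial\nu\le 0$ for every $k$, exactly as in the proof of Theorem \ref{thm-avvvooooccato}.

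The heart of the argument is to prove that $D_tv_n-\A(t)v_n\le h_p v_n$ on $(s,s+T_0]\times B_n$ for some $h_p=h_p(T_0)$ independent of $n$, $\varepsilon$ and $s$. Setting $\tau:=t-s$ and expanding via the chain rule,
\[
(D_t-\A)v_n=\tfrac{p}{2}v_n^{1-2/p}\bigl[\alpha A+|D_x\uu_n|^2+\tau B\bigr]+\tfrac{p}{2}\!\left(\tfrac{p}{2}-1\right)\!v_n^{1-4/p}\,\Psi_n,
\]
where $A:=(D_t-\A)|\uu_n|^2$, $B:=(D_t-\A)|D_x\uu_n|^2$ and $\Psi_n$ is the quadratic form in $D_x(\alpha|\uu_n|^2+\tau|D_x\uu_n|^2)$ generated by the chain rule, to be treated exactly as $\psi_6$ in the proof of Theorem \ref{thm-avvvooooccato}. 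A direct computation gives $A=2\sum_i\langle\uu_n,\tilde B_iD_i\uu_n\rangle+2\langle C\uu_n,\uu_n\rangle-2\sum_{ij}q_{ij}\langle D_i\uu_n,D_j\uu_n\rangle$; a single Young inequality on the $\tilde B_i$-cross term (with weight $\lambda_Q$) together with the structural hypothesis $\Lambda_C\le-2\gamma dm^2\xi^2\lambda_Q$ and $\gamma\ge 1/4$ yields $A\le -\lambda_Q|D_x\uu_n|^2$. Choosing $\alpha=\nu_0^{-1}$ then forces $\alpha A+|D_x\uu_n|^2\le 0$, so the $|D_x\uu_n|^2$ produced by differentiating the weight $\tau$ is absorbed by the negative gradient term in $A$. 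For $\tau B$, reproducing verbatim the estimates for $\psi_1,\ldots,\psi_5$ from the proof of Theorem \ref{thm-avvvooooccato}, under Hypotheses \ref{gra_est} one obtains
\[
B\le M_1(|\uu_n|^2+|D_x\uu_n|^2)-2\sum_{i,k}\langle QD_xD_iu_{n,k},D_xD_iu_{n,k}\rangle,
\]
and the good quadratic form in $D_x^2\uu_n$, combined with $\Psi_n$, is handled by the same Cauchy--Schwarz manipulation as for $\psi_6$ in Theorem \ref{thm-avvvooooccato}, which is where the condition $p\ge 1+\frac{1}{4\gamma}$ is used when $p<2$. Exploiting $|\uu_n|^2\le \alpha^{-1}v_n^{2/p}$, $\tau|D_x\uu_n|^2\le v_n^{2/p}$ and $\tau\le T_0$, the right-hand side is dominated by $h_p v_n$ with $h_p=O(T_0)$.

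With the differential inequality in hand, $w_n(t,\cdot):=v_n(t,\cdot)-e^{h_p(t-s)}G_n^{\mathcal N}(t,s)(\alpha|\f|^2+\varepsilon)^{p/2}$ satisfies $(D_t-\A-h_p)w_n\le 0$ in $(s,s+T_0]\times B_n$, $\partial w_n/\partial\nu\le 0$ on $(s,s+T_0]\times\partial B_n$ and $w_n(s,\cdot)=0$, so the classical maximum principle yields $w_n\le 0$. Letting $\varepsilon\to 0^+$ and then $n\to+\infty$ via Proposition \ref{recall}, and isolating $(t-s)^{p/2}|D_x\uu|^p$ by an elementary inequality of the form $a^{p/2}+b^{p/2}\le c_p(a+b)^{p/2}$, one obtains \eqref{lp-w1p} for $0<t-s\le T_0$. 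The extension to arbitrary $t>s\in I$ is then routine: fix $T_0=1$, decompose ${\bf G}(t,s)={\bf G}(t,t-1){\bf G}(t-1,s)$ for $t-s>1$, apply the short-time estimate to the outer factor with $\g:={\bf G}(t-1,s)\f$, bound $|\g|^p$ by $e^{pK(t-1-s)}G(t-1,s)|\f|^p$ via Proposition \ref{stimapuntLp}, use the evolution property $G(t,t-1)G(t-1,s)=G(t,s)$, and absorb the factor $(t-s)^{p/2}\le e^{(p/2)(t-s)}$ into a new constant $h_p$. The main technical obstacle is the second step: the assumption $\Lambda_C\le-2\gamma dm^2\xi^2\lambda_Q$ is sharp and leaves no slack in the estimate of $A$, so the Young inequalities used on the $\tilde B_i$-cross term in $A$ and in the $\psi_6$-type treatment of $\Psi_n$ must be tuned simultaneously to eliminate every residual factor of $\lambda_Q$ that would otherwise force $h_p$ to depend on $\sup\lambda_Q$, which may be infinite under Hypotheses \ref{uni2}.
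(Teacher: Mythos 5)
Your proposal takes a genuinely different route from the paper. The paper does not rework the Bernstein computation with a time weight; it post-processes the already-proved bound \eqref{stima_grad1} of Theorem~\ref{thm-avvvooooccato}: writing $\G(t,s)=\G(t,\sigma)\G(\sigma,s)$, applying \eqref{stima_grad1} to the outer factor, H\"older with respect to the kernel of $G(t,\sigma)$ plus a Young $\varepsilon$-split, integrating over $\sigma\in(s,t)$, and controlling the residual integral through the auxiliary quantity $\psi_n(\sigma)=G_n^{\mathcal N}(t,\sigma)(|{\bf G}_n^{\mathcal N}(\sigma,s)\f|^2+\delta)^{p/2}$, whose derivative is the place where $\Lambda_C\le -2\gamma dm^2\xi^2\lambda_Q$ enters; the $(t-s)^{-p/2}$ singularity is then extracted by minimising over $\varepsilon$. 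Your time-weighted Bernstein function $v_n=[\alpha|\uu_n|^2+(t-s)|D_x\uu_n|^2+\varepsilon]^{p/2}$ is the more classical one-shot alternative, and much of it is plausible, but it is not a rephrasing of what is done here.

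There is, however, a concrete gap in the bookkeeping of the chain-rule term $\Psi_n$. You first spend the full negative form $-2\sum_{ij}q_{ij}\langle D_i\uu_n,D_j\uu_n\rangle$ inside the claim $A\le -\lambda_Q|D_x\uu_n|^2$ and then set $\alpha=\nu_0^{-1}$ so that $\alpha A+|D_x\uu_n|^2\le 0$. But $\Psi_n$ is the $Q$-quadratic form in $D_x w$ with $w=\alpha|\uu_n|^2+\tau|D_x\uu_n|^2+\varepsilon$, and the Cauchy--Schwarz estimate (as in the treatment of $\psi_6$) produces, besides a term proportional to $\tau\sum_{i,k}\langle QD_xD_iu_{n,k},D_xD_iu_{n,k}\rangle$ that you absorb with the second-order negative form from $\tau B$, also a term proportional to $\alpha\sum_{k}\langle QD_xu_{n,k},D_xu_{n,k}\rangle$. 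For $p<2$ this appears with a positive coefficient $(2-p)$, and since $\sum_{k}\langle QD_xu_{n,k},D_xu_{n,k}\rangle$ is only bounded above by $\Lambda_Q|D_x\uu_n|^2$ — with $\Lambda_Q$ allowed to be unbounded under Hypotheses~\ref{uni2} — this leftover cannot be absorbed either by the bounded supremum in \eqref{fin2} or by the $\tau|D_x\uu_n|^2$-weight in $w$. The fix is to keep the two negative $Q$-forms (first- and second-order) visible, combine them with $\Psi_n$ first to obtain the good term $2(1-p)(\alpha\Phi_1+\tau\Phi_2)\le 2(1-p)\lambda_Q(\alpha|D_x\uu_n|^2+\tau|D_x^2\uu_n|^2)$, and only then feed the remaining cross terms (the $\tilde B_i$-term in $A$, the extra $|D_x\uu_n|^2$, and the $\psi_1$--$\psi_4$ analogues in $\tau B$) into this budget; the choice $\alpha\sim\gamma\nu_0^{-1}$ then closes the argument for $p\ge 1+\frac{1}{4\gamma}$. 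As written, though, the step $A\le-\lambda_Q|D_x\uu_n|^2$ followed by $\alpha A+|D_x\uu_n|^2\le 0$ double-counts the first-order $Q$-form and leaves $(2-p)\alpha\Phi_1$ dangling, which is a genuine hole.
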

\begin{proof}
Using the same arguments as in the proof of Theorem \ref{thm-avvvooooccato} we can limit ourselves to proving \eqref{lp-w1p}
when $p\in [1+\frac{1}{4(\beta\wedge\gamma)},2]$. Note that, under our assumptions, the estimates \eqref{pointwise} and \eqref{stima_grad1}
hold true for any $p \in [1+\frac{1}{4(\beta\wedge\gamma)},2]$, $\f\in C_c^\infty(\Rd,\Rm)$
and $t>s\in I$, with positive constants $K_J$ in \eqref{pointwise} and $C_p$ in \eqref{stima_grad1}, independent of $J$. Moreover, after a rescaling
argument we can assume that $K_J<0$. Thus, for any fixed $p \in [1+\frac{1}{4(\beta\wedge\gamma)},2]$, $\f\in C_c^\infty(\Rd,\Rm)$,
from \eqref{stima_grad1} and the evolution law it follows that
\begin{align*}
|D_x {\bf G}(t,s)\f|^p&=|D_x {\bf G}(t,\sigma){\bf G}(\sigma,s)\f|^p
\\
&\le c_pe^{C_p(t-\sigma)}G(t,\sigma)[|{\bf G}(\sigma,s)\f|^p+|D_x{\bf G}(\sigma,s)\f|^p]
\\
& \le c_pe^{C_p(t-\sigma)} \left[G(t,s)|\f|^p+G(t,\sigma)|D_x{\bf G}(\sigma,s)\f|^p\right]
\end{align*}
for any $\sigma \in (s,t)$. Since the transition kernel $p_{t,s}(x,y)$ associated with the evolution operator $G(t,s)$
is a positive $L^1$-function with respect to the variable $y$ with
$L^1$-norm equal to one (see \cite[Prop. 2.4]{KunLorLun09Non}), using the H\"older inequality we can estimate
\begin{align*}
G(t,\sigma)|D_x{\bf G}(\sigma,s)\f|^p
=&G(t,\sigma)\left[|D_x{\bf G}(\sigma,s)\f|^p(|{\bf G}(\sigma,s)\f|^2+\delta)^{\frac{p(p-2)}{4}}
(|{\bf G}(\sigma,s)\f|^2+\delta)^{\frac{p(2-p)}{4}}\right]
\\
\le&\Big (G(t,\sigma)(|D_x{\bf G}(\sigma,s)\f|^2(|{\bf G}(\sigma,s)\f|^2+\delta)^{\frac{p-2}{2}})\Big)^{\frac{p}{2}}
\left(G(t,\sigma)(|{\bf G}(\sigma,s)\f|^2+\delta)^{\frac{p}{2}}\right)^{\frac{2-p}{2}}
\\
\le& \varepsilon^{\frac{2}{p}}\frac{p}{2}G(t,\sigma)\Big (|D_x{\bf G}(\sigma,s)\f|^2(|{\bf G}(\sigma,s)\f|^2+\delta)^{\frac{p-2}{2}}\Big )
\\
&+\bigg (1-\frac{p}{2}\bigg )\varepsilon^{\frac{2}{p-2}}G(t,\sigma)(|{\bf G}(\sigma,s)\f|^2+\delta)^{\frac{p}{2}}
\end{align*}
for any $\varepsilon, \delta >0$, whence
\begin{align*}
e^{-C_p(t-\sigma)}|D_x {\bf G}(t,s)\f|^p \le & 
c_pG(t,s)|\f|^p+c_p\left(1-\frac{p}{2}\right)\varepsilon^{\frac{2}{p-2}}G(t,\sigma)(|{\bf G}(\sigma,s)\f|^2+\delta)^{\frac{p}{2}}\nonumber
\\
&+\frac{p}{2}c_p\varepsilon^{\frac{2}{p}}G(t,\sigma)\left(|D_x{\bf G}(\sigma,s)\f|^2(|{\bf G}(\sigma,s)\f|^2+\delta)^{\frac{p-2}{2}}\right).
\end{align*}
Integrating the previous estimate with respect to $\sigma \in (s,t)$, we deduce
\begin{align}\label{pre}
|D_x {\bf G}(t,s)\f|^p \le \frac{C_pc_p}{1-e^{-C_p(t-s)}}
\bigg\{&(t-s) G(t,s)|\f|^p+\bigg(1-\frac{p}{2}\bigg )\varepsilon^{\frac{2}{p-2}}
\int_s^tG(t,\sigma)(|{\bf G}(\sigma,s)\f|^2+\delta)^{\frac{p}{2}}d\sigma
\nonumber\\
&+\frac{p}{2}\varepsilon^{\frac{2}{p}}\int_s^tG(t,\sigma)
\left(|D_x{\bf G}(\sigma,s)\f|^2(|{\bf G}(\sigma,s)\f|^2+\delta)^{\frac{p-2}{2}}\right) d\sigma\bigg\}.
\end{align}
The claim reduces to proving that there exists a positive constant $k_p$ such that
\begin{equation}\label{aim_int}
 \int_s^t G(t,\sigma)\left(|D_x{\bf G}(\sigma,s)\f|^2(|{\bf G}(\sigma,s)\f|^2+\delta)^{\frac{p-2}{2}}\right)d \sigma 
 \le k_p G(t,s)(|\f|^2+\delta)^{\frac{p}{2}}
\end{equation}
for any $(t,s)\in \Sigma_I$. Indeed, once \eqref{aim_int} is proved, we replace \eqref{aim_int} into \eqref{pre} and, 
using \cite[Prop. 3.1]{KunLorLun09Non},
we let $\delta \to 0^+$. Finally, using again \eqref{pointwise} to estimate
$G(t,\sigma)|{\bf G}(\sigma,s)\f|^p\le G(t,\sigma)G(\sigma,s)|\f|^p=G(t,s)|f|^p$,
we get
\begin{align*}
|D_x {\bf G}(t,s)\f|^p \le &\frac{C_pc_p}{1-e^{-C_p(t-s)}}\left\{\left[1+\bigg(1-\frac{p}{2}\bigg)
\varepsilon^{\frac{2}{p-2}}\right](t-s)+ \frac{p}{2}\varepsilon^{\frac{2}{p}}k_p\right\} G(t,s)|\f|^p
\end{align*}
and, minimising on $\varepsilon$,
\begin{align*}
|D_x {\bf G}(t,s)\f|^p \le  \frac{C_pc_p}{1-e^{-C_p(t-s)}}\left[(t-s)+k_p^{\frac{p}{2}}(t-s)^{1-\frac{p}{2}}\right] G(t,s)|\f|^p
\end{align*}
whence the claim follows.
Therefore, to conclude we prove \eqref{aim_int}.
To this aim, we set
\[
\psi_n(\sigma)= G_n^{\mathcal N}(t,\sigma)\left(|{\bf G}_n^{\mathcal N}(\sigma,s)\f|^2+\delta\right)^{\frac{p}{2}}
=G_n^{\mathcal N}(t,\sigma)\left(|{\bf u}_n(\sigma, \cdot)|^2+\delta\right)^{\frac{p}{2}}= G_n^{\mathcal N}(t,\sigma)(v_n(\sigma, \cdot))
\]
for any $\sigma \in [s,t]$ and $n \in \N$, where $G_n^{\mathcal N}(t,\sigma)$ and ${\bf G}_n^{\mathcal N}(t,\sigma)$
are the same evolution operator considered in the proof of Theorem \ref{thm-avvvooooccato}.
Since the normal derivative of the function $v_n(\sigma,\cdot)$ vanishes of $\partial B_n$ for any $\sigma\in (s,t)$,
classical results on evolution operators show that the function $\psi_n$ is differentiable in $(s,t)$ and a straightforward computation yields 
\begin{align*}
\psi'_n(\sigma)&= G_n^{\mathcal N}(t,\sigma)\left[D_\sigma v_n(\sigma, \cdot)-\mathcal{A}(\sigma) v_n(\sigma, \cdot)\right]
\\
&=pG_n^{\mathcal N}(t,\sigma)\left[(v_n(\sigma))^{1-\frac{2}{p}} \left(\sum_{i=1}^d\langle \uu_n, \tilde{B}_i D_i \uu_n\rangle+ 
\langle \uu_n, C\uu_n\rangle-\sum_{i,j=1}^dq_{ij}\langle D_i\uu,D_j\uu\rangle\right)\right. 
\\
&\left. \qquad\qquad\quad\;\,+ (2-p)(v_n(\sigma))^{1-\frac{4}{p}}\sum_{i,j=1}^d q_{ij}\langle \uu, D_i \uu\rangle \langle \uu, D_j \uu\rangle\right].
\end{align*}
Using \eqref{star}, we get
\begin{align*}
\psi'_n(\sigma) & \le pG_n^{\mathcal N}(t,\sigma)
\left[(v_n(\sigma))^{1-\frac{2}{p}} 
\left(\sum_{i=1}^d\langle \uu_n, \tilde{B}_i D_i \uu_n\rangle+ \langle \uu_n, C\uu_n\rangle+(1-p)\lambda_Q|D_x \uu_{n}|^2\right)\right].
\end{align*}
Thus, taking Hypotheses \ref{uni2}(i) into account, we deduce
\begin{align*}
\sum_{i=1}^d\langle \uu_n, \tilde{B}_i D_i \uu_n\rangle+ \langle \uu_n, C\uu_n\rangle\le& m\xi\lambda_Q|\uu_n|
\sum_{i=1}^d|D_i\uu_n|+ \Lambda_C|\uu_n|^2
\\
\le &(\varepsilon dm^2\xi^2)\lambda_Q|D_x\uu_n|^2+\bigg (\frac{\lambda_Q}{4\varepsilon}+\Lambda_C\bigg )|\uu_n|^2
\end{align*}
for any $\varepsilon=\varepsilon(t)>0$. Consequently,
\begin{align*}
\psi'_n(\sigma) & \le pG_n^{\mathcal N}(t,\sigma)\left[(v_n(\sigma))^{1-\frac{2}{p}} 
\left((\varepsilon dm^2\xi^2+1-p)\lambda_Q|D_x\uu_n|^2+\bigg (\frac{\lambda_Q}{4\varepsilon}+\Lambda_C\bigg )|\uu_n|^2\right)\right].
\end{align*}
Choosing $\varepsilon= (p-1)(2dm^2\xi^2)^{-1}$ implies
\begin{equation}\label{final}
\psi'_n(\sigma)
\le 2^{-1}p(1-p)\nu_0 G_n^{\mathcal N}(t,\sigma)\left[(v_n(\sigma))^{1-\frac{2}{p}}|D_x\uu_n|^2\right]
\end{equation}
Integrating both sides of \eqref{final} with respect to $\sigma$ in $[s+h,t-h]$ and then letting $n$ to $+\infty$ and 
$h$ to $0$ we get \eqref{aim_int} with $k_p=2[p(p-1)\nu_0]^{-1}$. The proof is so completed.
\end{proof}

\begin{coro}
Under the same Hypotheses as in Theorem $\ref{Lp-w1p-thm}$ and assuming
that $G(t,s)$ satisfies estimate \eqref{est_p} with $p=1$, the evolution
operator $\G(t,s)$ is bounded from $W^{\theta_1,p}(\Rd;\R^m)$ in
$W^{\theta_2,p}(\Rd;\R^m)$,
for any $p\in [1+\frac{1}{4(\beta\wedge \gamma)},+\infty)$, $0 \le
\theta_1\le \theta_2\le 1$ and $(t,s)\in \Sigma_I$.
\end{coro}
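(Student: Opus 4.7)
The plan is to deduce the claim from three endpoint boundedness statements for $\G(t,s)$ and then close the proof by a standard interpolation-plus-embedding argument. Specifically, I would first establish that for every $(t,s)\in\Sigma_I$ and every admissible $p$ the operator $\G(t,s)$ is bounded as a map $L^p(\Rd;\Rm)\to L^p(\Rd;\Rm)$, $L^p(\Rd;\Rm)\to W^{1,p}(\Rd;\Rm)$ and $W^{1,p}(\Rd;\Rm)\to W^{1,p}(\Rd;\Rm)$.

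To obtain these three bounds I would integrate the pointwise estimates already at our disposal. Integrating \eqref{pointwise} in $x$ and invoking the standing assumption that $G(t,s)$ satisfies \eqref{est_p} with $p=1$ (hence preserves $L^1(\Rd)$) yields the $L^p\to L^p$ bound. Integrating the gradient estimate \eqref{lp-w1p} of Theorem \ref{Lp-w1p-thm} in the same fashion yields the $L^p\to W^{1,p}$ bound, with a constant that blows up like $(t-s)^{-1/2}$; this is harmless since the corollary only asserts boundedness for each fixed $(t,s)\in\Sigma_I$. Combining the $L^p\to L^p$ bound with the integrated version of \eqref{stima_grad1} from Theorem \ref{thm-avvvooooccato} provides the $W^{1,p}\to W^{1,p}$ bound. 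All three pointwise estimates are originally stated for $\f\in C^\infty_c(\Rd;\Rm)$, and each integrated inequality then extends by density to the corresponding space, since the admissibility assumption $p>1$ guarantees that $C^\infty_c(\Rd;\Rm)$ is dense in both $L^p(\Rd;\Rm)$ and $W^{1,p}(\Rd;\Rm)$.

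With the three endpoint bounds in hand, I would conclude via interpolation. Using the classical identification $[L^p(\Rd;\Rm),W^{1,p}(\Rd;\Rm)]_\theta=W^{\theta,p}(\Rd;\Rm)$, valid for $1<p<+\infty$ and $\theta\in[0,1]$, complex interpolation between the bounded maps $\G(t,s):L^p\to W^{1,p}$ and $\G(t,s):W^{1,p}\to W^{1,p}$ produces $\G(t,s):W^{\theta_1,p}\to W^{1,p}$ for every $\theta_1\in[0,1]$. Composing with the continuous embedding $W^{1,p}(\Rd;\Rm)\hookrightarrow W^{\theta_2,p}(\Rd;\Rm)$, which holds whenever $\theta_2\le 1$, delivers the desired boundedness from $W^{\theta_1,p}(\Rd;\Rm)$ into $W^{\theta_2,p}(\Rd;\Rm)$.

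I do not foresee any substantial obstacle: the deep analytic content is entirely encoded in the pointwise estimates proved in the previous sections, and the remaining steps are density and standard Sobolev interpolation. The only mildly delicate point to verify is that the constants produced by integrating the three pointwise bounds depend only on $t-s$ in a locally bounded way; this is transparent from the explicit form of the constants $K_J$, $h_p$, $C_{p,J}$ and $k_p$ and from the fact that the constant $c_1$ in \eqref{est_p} at $p=1$ is itself locally bounded on $[0,+\infty)$.
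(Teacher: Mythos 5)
Your strategy coincides with the paper's: establish the three endpoint bounds $L^p\to L^p$, $L^p\to W^{1,p}$, $W^{1,p}\to W^{1,p}$ by integrating the pointwise estimates \eqref{pointwise}, \eqref{lp-w1p}, \eqref{stima_grad1} over $\Rd$ and invoking the $L^1$-boundedness of the scalar operator $G(t,s)$, extend by density, then interpolate and compose with the Sobolev embedding. This is exactly how the paper argues.

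There is, however, one genuine technical error in your interpolation step. You invoke the \emph{complex} interpolation identity $[L^p(\Rd;\Rm),W^{1,p}(\Rd;\Rm)]_\theta = W^{\theta,p}(\Rd;\Rm)$. For $p\neq 2$ this is false: complex interpolation of $L^p$ and $W^{1,p}$ yields the Bessel potential (Lebesgue--Liouville) space $H^{\theta,p}$, which does not coincide with the Sobolev--Slobodeckij space $W^{\theta,p}=B^\theta_{p,p}$ unless $p=2$ or $\theta$ is an integer. What actually holds, and what the paper uses with reference to Triebel, is the \emph{real} interpolation identity $(L^p(\Rd;\Rm),W^{1,p}(\Rd;\Rm))_{\theta,p}=W^{\theta,p}(\Rd;\Rm)$. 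The fix is immediate: replace the complex method by the real method with second index $p$; the interpolation of operators then gives $\G(t,s):W^{\theta_1,p}\to W^{1,p}$ exactly as you want, and composing with $W^{1,p}\hookrightarrow W^{\theta_2,p}$ finishes the proof. Everything else in your argument is sound; in fact, since the corollary is a fixed-$(t,s)$ boundedness statement, one could even dispense with interpolation entirely by chaining $W^{\theta_1,p}\hookrightarrow L^p \to W^{1,p}\hookrightarrow W^{\theta_2,p}$, though interpolation gives a better power of $t-s$ in the operator norm.
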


\begin{proof}
>From Theorem \ref{th2} it follows that $\|{\bf G}(t,s)\f\|_p \le
c_p(t-s)\|\f\|_p$ for any $t>s \in I$, $\f \in C^\infty_c(\Rd; \Rm)$ and
some positive function $c_p:(0,+\infty)\to (0,+\infty)$. Moreover,
integrating the estimates \eqref{stima_grad1} and \eqref{lp-w1p} in $\Rd$,
writing \eqref{est_p} with $p=1$ and $G(t,s)$ instead of $\G(t,s)$ and using the above 
estimate for $\|{\bf G}(t,s)\f\|_p$, it follows that
\begin{equation}\label{fragole}
\|{\bf G}(t,s)\f\|_{W^{1,p}(\Rd;\R^m)}\le c_p^1(t-s)\|\f\|_{W^{1,p}(\Rd;\R^m)},\qquad\;\,\|{\bf
G}(t,s)\f\|_{W^{1,p}(\Rd;\R^m)}\le c_p^2(t-s)\|\f\|_{L^p(\Rd;\R^m)},
\end{equation}
for any $t>s \in I$, $p\in [1+\frac{1}{4(\beta\wedge \gamma)},+\infty)$,
$\f \in C^\infty_c(\Rd; \Rm)$ and some positive functions
$c_p^i:(0,+\infty)\to (0,+\infty)$, $i=1,2$. By density, the first estimate in
\eqref{fragole} can be extended to any $\f \in W^{1,p}(\Rd;\Rm)$ and the second to $\f\in L^p(\Rd;\Rm)$.
Thus, the claim is proved for $\theta_2=1$ and $\theta_1=0,1$. The
remaining cases follows by interpolation, taking into account that for any
$\theta \in (0,1)$ and $p \in [1, +\infty)$, $W^{\theta,p}(\Rd;\Rm)$
equals the real interpolation space $(L^p(\Rd;\Rm);
W^{1,p}(\Rd;\Rm))_{\theta,p}$ with equivalence of the respective norms (see \cite[Thm. 2.4.1(a)]{triebel}).
\end{proof}

\section{Examples}

Here we exhibit some classes of elliptic operators to which Theorem
\ref{th1} can be applied.
Indeed examples of operators which satisfy the hypotheses of Theorem
\ref{th2} can be found in \cite{AngLor10Com}.
\begin{example}{\rm
Let $\boldsymbol{\mathcal A}$ be as in \eqref{operat-A} with $Q=I_m$,
$B_i(x)=-x_i(1+|x|^2)^a\hat{B}_i$ and $C(x)=-|x|^2(1+|x|^2)^b\hat{C}$
for any $x \in \Rd$, $i=1,\ldots,d$. Here, $\hat B_i$ ($i=1,\ldots,d$) and
$\hat C$ are constant,
symmetric and positive definite matrices and $b>2a\ge 0$. It is easy to
check that
\begin{equation*}
\mathcal{K}_\eta(x)\ge  -(1+|x|^2)^{2a}\sum_{i=1}^d x_i^2|\hat B_i|^2
+4|x|^2(1+|x|^2)^b \lambda_{\hat{C}}
\end{equation*}
for any $x\in\Rd$.
Moreover, choosing $\kappa(x)=-|x|^c$ with $c\in (2+2a,2+2b)$, we get 
\begin{align*}
\tilde{\mathcal{K}}_\eta(x)\ge& -(1+|x|^2)^{2a}\sum_{i=1}^d x_i^2|\hat
B_i|^2+4|x|^2(1+|x|^2)^b\lambda_{\hat{C}}-4(1+|x|^2)^a\sum_{i=1}^d
\Lambda_{\hat{B}_i}\\
&-8a(1+|x|^2)^{a-1}\sum_{i=1}^d \Lambda_{\hat{B}_i}x_i^2-4|x|^c
\end{align*}
for any $x\in\Rd$.
Since $b>2a$ and $c <2+ 2b$, the functions $\mathcal{K}_\eta$ and
$\tilde{\mathcal{K}}_\eta$ blow
 up at infinity as $|x|\to \infty$, uniformly with respect to $\eta \in
\partial B_1$. Therefore, assumption \eqref{weak} is satisfied both by 
$\mathcal{K}_\eta$ and $\tilde{\mathcal{K}}_\eta$.
On the other hand, taking into account that $c >2+2a$, the function
$\varphi(x)=1+|x|^2$, $x \in \Rd$, satisfies
Hypotheses \ref{uni1}(ii) and \ref{uni11}(ii) for any $\lambda>0$.
Finally, a straightforward computation shows that
\begin{align*}
\Lambda_{2C-\sum_{i=1}^d D_i B_i}(x)\le -2 |x|^2(1+|x|^2)^b \lambda_{\hat
C}+(1+|x|^2)^a \sum_{i=1}^d
\Lambda_{\hat{B}_i}+2a(1+|x|^2)^{a-1}\sum_{i=1}^d x_i^2
\Lambda_{\hat{B}_i}
\end{align*}
for any $x\in\Rd$.
The choice of $a$ and $b$ yields that estimate \eqref{p-2-infty} is
satisfied, too. Since,
all the assumptions in
Theorem \ref{th1} are satisfied, the evolution operator
${\bf G}(t,s)$
associated with $\boldsymbol{\mathcal A}$ is well-defined in $L^p(\Rd;
\Rm)$ for any $p \ge 1$. Moreover, estimate \eqref{est_p}
holds true, where $c_p(t-s)$ is defined in Theorem \ref{th1}.
}\end{example}

In the following example we consider the operator $\boldsymbol{\mathcal A}$ with $B_i$, $C$ as above, but allow the 
diffusion coefficients $q_{ij}$ to be unbounded as well. 
\begin{example}{\rm
Let $\boldsymbol{\mathcal A}$ be as in \eqref{operat-A} with
$Q(x)=(1+|x|^2)^\delta I_m$,
$B_i(x)=-x_i(1+|x|^2)^a I_m+(1+|x|^2)^b\hat{B}_i$ $(i=1,\ldots,d)$ and
$C(x)=-(1+|x|^2)^c\hat{C}$ for any $x \in \Rd$. We assume that
$\hat{B}_i$ ($i=1,\ldots,d$) and $\hat{C}$ are constant, symmetric and
positive definite matrices.
Finally, $\delta,a,b \in [0,+\infty)$ satisfy $2b \le \delta <a+1$ and $c
> 2a\vee (a+1)$.
We have that
\begin{align*}
\mathcal{K}_\eta(x) = (1+|x|^2)^{-\delta+2b}\sum_{i=1}^d\left[\langle
\hat{B}_i \eta, \eta\rangle^2
-|\hat{B}_i\eta|^2\right] +4(1+|x|^2)^c \langle \hat{C} \eta, \eta \rangle,
\end{align*}
for any $x\in \Rd$ and $\eta \in \partial B_1$. Since $\delta \ge 2b$, the
first term in the previous formula is bounded in $\Rd$,
therefore \eqref{weak} is clearly satisfied by $\mathcal K_\eta$ and also
by $\tilde{\mathcal{K}}_\eta$, where $\kappa(x)=-|x|^s$ and
$s\in(2+2a,2c)$.
Indeed,
\begin{align*}
\tilde{\mathcal{K}}_\eta(x) \ge\mathcal{K}_\eta(x)- 4(1+|x|^2)^a -
8b|x|^2(1+|x|^2)^{a-1}+8b(1+|x|^2)^{b-1}\sum_{i=1}^d x_i \langle \hat{B}_i \eta,\eta\rangle-|x|^s
\end{align*}
for any $x\in\Rd$.
The choice of $\delta ,a$, $b$ and $s$  yields that the function
$\varphi$, defined in (i) is a Lyapunov function
in $\R^d$ for both $\mathcal{A}$ and $\tilde{\mathcal A}$.
Moreover,
\begin{align*}
\Lambda_{2C-\sum_{i=1}^d D_i B_i}(x)\le - 2(1+|x|^2)^c \lambda_{\hat C}+
(1+|x|^2)^b+ 2b(1+|x|^2)^{b-1}|x|^2+ 2c(1+|x|^2)^{c-1}\sum_{i=1}^d |x_i|\Lambda_{\hat{B}_i},
\end{align*}
and, since the leading term in the previous estimate is the first term in
the right-hand side,
estimate \eqref{p-2-infty} is clearly satisfied. Thus, Theorem \ref{th1}
can be applied. Moreover,
since $c > \delta$, $2c> 2b-1$ and $b \le \delta$, the assumptions of Theorems
\ref{thm-avvvooooccato} and \ref{Lp-w1p-thm} are satisfied and
estimates \eqref{stima_grad1} and \eqref{lp-w1p} hold true in $\Rd$ for any $(t,s)\in \Sigma_I$.
}\end{example}

\begin{rmk}{\rm In the previous examples we can replace the constant matrices $I_m$,
$\hat{B}_j$ ($j=1, \ldots,d$) and $\hat C$
by matrices of the same type, i.e., by ${\rm diag}(q_i(t))$,
$\hat{B}_j(t)$ ($i=1, \ldots,m$, $j=1, \ldots,d$) and $\hat C(t)$
respectively,
whose entries are functions which belong to $C^{\alpha/2}_{ \rm
loc}(I)\cap C_b(I)$ and such that
$q_i$, $\lambda_{\hat{B}_i}$ ($i=1, \ldots,m$, $j=1, \ldots,d$) and
$\lambda_{\hat{C}}$, have positive infima on $I$.
}\end{rmk}

\appendix\section{Uniform estimates}

Now, we prove that the $L^\infty$-norm of the classical solutions of the Cauchy problems
\eqref{eq:cauchy_problem_system} and \eqref{prob_approx_dual}
can be estimated in terms of the $L^\infty$-norm of the initial datum. The proof of this result
can be found in \cite{AddAngLor15Cou} in the case when $\boldsymbol{\mathcal A}$ is not in divergence form.

\begin{prop}
\label{prop-appendix}
Let us assume that Hypotheses $\ref{hyp_base}$ hold true. If there exists a function $h:I\times \Rd\to \R$
bounded from above, such that
Hypotheses $\ref{uni1}$ are satisfied with $\mathcal{K}_\eta$ replaced by $\mathcal{K}_\eta+4 h$
and $\mathcal A_\eta$ replaced by $\mathcal A_\eta+  2h$
then the evolution operator associated with $\boldsymbol{\mathcal A}$ in $C_b(\Rd; \Rm)$ satisfies the estimate
\begin{equation*}
\|{\bf G}(t,s)\f\|_\infty \le e^{h_0(t-s)}\|\f\|_\infty,
\end{equation*}
for any $t>s \in I$, $\f \in C_b(\Rd;\Rm)$, where $h_0= \sup_{I\times \Rd}h$.
\end{prop}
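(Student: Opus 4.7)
Plan: The proof parallels that of Proposition \ref{stimapuntLp} with $p=2$, adapted so that the modified coupling functional $\mathcal K_\eta + 4h$ enters the differential inequality in place of $\mathcal K_\eta$, and combined with a time-shift perturbation that closes the maximum principle in the presence of a potential term. By a standard bounded pointwise-convergence argument one reduces to $\f\in C^2_c(\Rd;\Rm)$ with $\mathrm{supp}(\f)\subset B_{n_0}$. Fix $T>s$ in $I$, and for $n\ge n_0$ let $\uu_n=\G_n^{\mathcal D}(\cdot,s)\f$; by Proposition \ref{recall}, $\uu_n\to \G(\cdot,s)\f$ in $C^{1,2}_{\mathrm{loc}}$, so it suffices to prove the uniform estimate
\[
|\uu_n(t,x)|^2\le e^{2h_0(t-s)}\|\f\|_\infty^2,\qquad (t,x)\in[s,T]\times\overline{B_n}.
\]

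The algebraic core is the identity that, for any $\eta\in\partial B_1$,
\[
D_t|\uu_n|^2-\A_\eta|\uu_n|^2 = -2\sum_{i,j=1}^d q_{ij}\langle D_i\uu_n,D_j\uu_n\rangle+2\sum_{i=1}^d\bigl[\langle\uu_n,B_iD_i\uu_n\rangle-\langle B_i\eta,\eta\rangle\langle\uu_n,D_i\uu_n\rangle\bigr]+2\langle\uu_n,C\uu_n\rangle,
\]
which is a quadratic form in $p=D_x\uu_n$ whose Hessian $-4Q\otimes I_m$ is negative definite by uniform ellipticity. Maximising in $p$ by completing the square and then specialising to $\eta=\uu_n/|\uu_n|$ at points where $\uu_n\neq 0$, the linear term produces the projections $PB_i\eta$ onto $\eta^\perp$, and the combinations $\langle B_i\eta,B_j\eta\rangle-\langle B_i\eta,\eta\rangle\langle B_j\eta,\eta\rangle$ appear; the terms involving $\langle C\eta,\eta\rangle$ collapse against the corresponding piece of $\mathcal K_\eta$ from \eqref{eq:positive condition}, leaving exactly $-\tfrac12|\uu_n|^2\mathcal K_\eta$. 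Invoking the modified hypothesis $\mathcal K_\eta+4h\ge 0$ then yields
\[
D_t|\uu_n|^2-\A_\eta|\uu_n|^2 \le 2h|\uu_n|^2
\]
at every point where $\uu_n\neq 0$.

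Finally, set $V_n=e^{-2h_0(t-s)}|\uu_n|^2$ and introduce the linear operator $\tilde\A_\eta=\A_\eta+2(h-h_0)$, whose zero-order coefficient $2(h-h_0)$ is nonpositive since $h\le h_0$; the previous step yields $D_tV_n-\tilde\A_\eta V_n\le 0$ at points with $\uu_n\neq 0$ for the choice $\eta=\uu_n/|\uu_n|$. On the parabolic boundary of $[s,T]\times\overline{B_n}$ one has $V_n\le\|\f\|_\infty^2$ (vanishing on $\partial B_n$ by the Dirichlet condition and bounded by $\|\f\|_\infty^2$ at $t=s$). For $\rho>0$ set $W_n^\rho=V_n-\|\f\|_\infty^2-\rho(t-s)$; if $W_n^\rho$ attained a positive maximum at an interior point $(t_0,x_0)$ with $t_0>s$, then $\uu_n(t_0,x_0)\neq 0$, $\eta_0=\uu_n(t_0,x_0)/|\uu_n(t_0,x_0)|$ is admissible, and combining $D_tW_n^\rho\ge 0$, $D_xW_n^\rho=0$, $D_x^2W_n^\rho\le 0$ with the nonpositivity of the potential of $\tilde\A_{\eta_0}$ forces
\[
0\le D_tW_n^\rho-\tilde\A_{\eta_0}W_n^\rho\le -\rho+2(h-h_0)(\|\f\|_\infty^2+\rho(t_0-s))\le -\rho,
\]
a contradiction. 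Thus $W_n^\rho\le 0$; letting $\rho\to 0^+$ and then $n\to+\infty$ gives the stated bound. The main obstacle is the algebraic identity in the second step: recognising $\mathcal K_\eta$ as the exact value of the optimised quadratic form requires careful bookkeeping of the symmetry of the $B_i$ and the cancellation of the pieces involving $\langle C\eta,\eta\rangle$. Once that is in place, the perturbation $\rho(t-s)$ supplies just enough strictness to run the weak maximum principle despite the potential $2(h-h_0)$ in $\tilde\A_\eta$.
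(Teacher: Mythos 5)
Your algebraic core is correct and, in fact, is the same computation as the paper's: maximising the concave quadratic form in $D_x\uu$ at the point $\eta=\uu/|\uu|$ produces exactly $-\tfrac12|\uu|^2\mathcal K_\eta$, and the modified hypothesis $\mathcal K_\eta+4h\ge 0$ then gives the pointwise differential inequality. The maximum-point arithmetic on $W_n^\rho$ with the frozen operator $\A_{\eta_0}$ and the nonpositive potential $2(h-h_0)$ is also sound. Where you genuinely diverge from the paper is in how the unboundedness of $\R^d$ is handled: the paper works directly with $\uu=\G(\cdot,s)\f$ on the whole space and uses the (modified) Lyapunov function $\varphi_J$ from Hypothesis \ref{uni1}(ii) as an $n^{-1}\varphi$ penalty to force the maximum to be attained, which in turn changes the gradient constraint at the maximum point and leads to the Lagrange-multiplier computation; you instead truncate to $B_n$, work with the Dirichlet approximations $\uu_n=\G_n^{\mathcal D}(\cdot,s)\f$, and use the linear-in-time perturbation $\rho(t-s)$ to get strictness. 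Your route is conceptually simpler (unconstrained optimisation, no Lagrange multipliers), but it buys this at the price of not using the modified Lyapunov condition at all in the estimate itself.

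This exchange is where the proposal has a gap. You invoke Proposition \ref{recall} for the convergence $\uu_n\to\G(\cdot,s)\f$, but that proposition is stated under Hypotheses \ref{uni1} (or \ref{uni2}), whereas Proposition \ref{prop-appendix} only assumes the \emph{modified} version of Hypotheses \ref{uni1}: $\mathcal K_\eta$ is replaced by $\mathcal K_\eta+4h$ with $h$ merely bounded from above, so $\mathcal K_\eta$ itself may be unbounded below, and \ref{uni1}(ii) holds for $\A_\eta+2h$, not for $\A_\eta$. Under these hypotheses alone, Proposition \ref{recall} is not available, and the convergence of the Dirichlet approximations would have to be re-established — exactly the work that the paper's Lyapunov penalty sidesteps by never leaving the unbounded domain. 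In the concrete applications (the proof of Theorem \ref{th1}) the full Hypotheses \ref{uni1} and \ref{uni11} do hold, so your argument would apply there, but as a proof of the proposition as stated it is incomplete. A secondary issue: the opening reduction to $\f\in C_c^2(\Rd;\Rm)$ by "bounded pointwise convergence" is not justified — for vector-valued systems the transition kernel of $\G(t,s)$ is matrix-valued and not sign-controlled, so dominated convergence of $\G(t,s)\f_k$ is not automatic. This step is in any case unnecessary if you either work directly with $\f\in C_b$ on $B_n$ (handling the corner $\{s\}\times\partial B_n$ carefully) or, better, adopt the paper's strategy and never truncate.

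Summary: correct algebra and a valid alternative scheme in spirit, but the replacement of the Lyapunov-function compactification by the Dirichlet approximation scheme creates a dependence on Proposition \ref{recall} that is not justified under the stated hypotheses and must be closed.
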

\begin{proof}
Let $T>s$ and $J:=[s,T]$. Up to replacing $\lambda:=\lambda_J$ with a larger constant if needed, we can assume that
there exists a function $\varphi:=\varphi_J$ as in Hypothesis \ref{uni1}(ii) satisfying
$\sup_{\eta \in \partial B_1}\sup_{J\times\R^d}( \A_{\eta}\varphi-\lambda\varphi)<0$
with $\lambda>2h_0$. Now, for any $t\in J$, $x\in\R^d$ and $n\in\N$, we set
\[
v_n(t,x):=e^{-\lambda(t-s)}|\uu(t,x)|^2-e^{-(\lambda-2h_0)(t-s)}\|\f\|_{\infty}^2-\frac{\varphi(x)}{n}.
\]
where $\uu={\bf G}(\cdot,s)\f$.
Our aim consists in proving that $v_n\leq0$ in $[s,T]\times\R^d$ for any $n\in\N$. Indeed in this case letting $n\to +\infty$ and recalling that
$T$ has been arbitrarily fixed, we obtain $|\uu(t,\cdot)|^2\le e^{2h_0 (t-s)}\|\f\|_{\infty}^2$ in $\Rd$, for any $t \in [s,T]$
and the claim follows from the arbitrariness of $T>s$.

A straightforward computation shows that
\begin{align*}
D_tv_n(t,x)=e^{-\lambda(t-s)}&\left[(\A_0(t)+2h-\lambda)|\uu(t,\cdot)|^2-2V(D_1\uu(t,\cdot),\ldots,D_d\uu(t,\cdot),\uu(t,\cdot))
\right.\\
&\left.+ (\lambda-2h_0)e^{2h_0(t-s)}\|\f\|_{\infty}^2\right],
\end{align*}
in $(s,T]\times \Rd$, where $\A_0(t)={\rm div}(Q(t,\cdot)D_x)$ and
\begin{align*}
V(\cdot,\cdot,\xi^1,\ldots,\xi^d,\zeta) :=&
\sum_{i,j=1}^dq_{ij}\langle\xi^i,\xi^j\rangle-\sum_{j=1}^d\langle B_j\xi^j,\zeta\rangle-\langle (C-h)\zeta,\zeta\rangle
\end{align*}
for any $\xi^1,\ldots,\xi^d,\zeta\in\R^m$. Since $\lambda>2h_0$, we can estimate
\begin{align}
&D_tv_n(t,\cdot) -(\A_0(t)+2h-\lambda)v_n(t,\cdot)
-2(h-h_0)e^{-(\lambda-2h_0)(t-s)}\|{\bf f}\|_{\infty}^2\notag\\
<&\frac{1}{n}(\A_0(t)+2h-\lambda)\varphi
-2e^{-\lambda(t-s)}V(D_1\uu(t,\cdot),\ldots,D_d\uu(t,\cdot),\uu(t,\cdot)),
\label{davide-lecce}
\end{align}
in $\Rd$ for any $t\in (s,T]$. Since $\lim_{|x|\rightarrow+\infty}v_n(t,x)=-\infty$, uniformly with respect to $t\in [s,T]$, $v_n$
attains its maximum at some point $(t_0,x_0)\in [s,T]\times\R^d$. If $t_0=s$ the proof is complete since $v_n(s,\cdot)<0$.
If $t_0>s$, assume by contradiction that $v_n(t_0,x_0)>0$. In this case, since $\lambda-2h\ge 0$ in $I \times \Rd$,
the left-hand side of \eqref{davide-lecce} is strictly positive at $(t_0,x_0)$.

Thus, it suffices to prove that the right-hand side of \eqref{davide-lecce} is nonpositive at $(t_0,x_0)$
to get a contradiction and to conclude that $v_n\le 0$ in $[s,T]\times\R^d$.

Since $D_xv_n(t_0,x_0)=0$, it holds that
$\langle D_j\uu(t_0,x_0),\uu(t_0,x_0)\rangle=D_j\tilde\varphi(x_0)/(2n)$ for any $j=1,\ldots,d$,
 where $\tilde\varphi=e^{\lambda(t_0-s)}\varphi$. Thus it is enough to
show that the maximum of the function
\begin{align*}
F_{n,\zeta}(\xi^1,\ldots,\xi^d):=\frac{1}{n}(\A_0(t_0)+2h(t_0,\cdot)-\lambda)\tilde\varphi(x_0)-2V(t_0,x_0,\xi^1,\ldots,\xi^d,\zeta),
\end{align*}
in the set $\Sigma=\left\{(\xi^1,\ldots,\xi^d)\in\R^{md}: \langle \xi^j,\zeta\rangle=(2n)^{-1}D_j\tilde\varphi(x_0),\,j=1,\ldots,d\right\}$ is nonpositive.
Note that the function $(\xi^1,\ldots\xi^d)\mapsto V(t_0,x_0,\xi^1,\ldots,\xi^d,\zeta)$ tends to $+\infty$ as $\|(\xi^1,\ldots,\xi^d)\|\to +\infty$, for any $\zeta\in\R^m$.
Hence, $F_{n,\zeta}$ has a maximum in $\Sigma$ attained at some point
$(\xi^1_0,\ldots,\xi^d_0)$. Applying the Lagrange multipliers theorem, it can be proved that
\begin{align*}
\xi^j_0=&\frac{1}{2n}|\zeta|^{-2}\zeta D_j\tilde\varphi(x_0)+\frac{1}{2}\sum_{k=1}^d(Q^{-1})_{jk}(t_0,x_0)\left[
B_k(t_0,x_0)\zeta-|\zeta|^{-2}\langle B_k(t_0,x_0)\zeta,\zeta\rangle\zeta\right],
\end{align*}
for $j=1,\ldots,d$ and, consequently, that
\begin{align*}
V(t_0,x_0,\xi_0^1,\ldots,\xi_0^d)=&
\frac{1}{4n^2|\zeta|^2}|Q^{1/2}(t_0,x_0)D\tilde\varphi(x_0)|^2
-\langle (C(t_0,x_0)-h(t_0,x_0))\zeta,\zeta\rangle\\
&-\frac{1}{4}\sum_{i,k=1}^d(Q^{-1})_{ik}\langle B_i(t_0,x_0)\zeta, B_k(t_0,x_0)\zeta\rangle\\
&+\frac{1}{4|\zeta|^2}\sum_{i,k=1}^d(Q^{-1})_{ik}\langle  B_i(t_0,x_0)\zeta,\zeta\rangle \langle B_k(t_0,x_0)\zeta,\zeta\rangle
\\
&-\frac{1}{2n|\zeta|^2}\sum_{j=1}^dD_j\tilde\varphi(x_0)\langle B_j(t_0,x_0)\zeta,\zeta\rangle.
\end{align*}
It thus follows that
\begin{align*}
\max_{\Sigma} F_{n,\zeta}
=&\frac{1}{n}(\A_{\zeta/|\zeta|}(t_0)\tilde\varphi(x_0)-\lambda\tilde\varphi(x_0))
\\
&-\frac{1}{2n^2|\zeta|^2}|Q^{1/2}(t_0,x_0)D\tilde\varphi(x_0)|^2
-\frac{1}{2}|\zeta|^2{\mathcal K}(t_0,x_0,|\zeta|^{-1}\zeta)\le 0,
\end{align*}
and the proof is complete.
\end{proof}

\begin{coro}
Let assume that Hypotheses $\ref{hyp_base}$ hold true. Then, 
\begin{enumerate}[\rm (i)]
\item   if Hypotheses $\ref{uni2}$ are satisfied, then the classical solution $\uu$ of the problem \eqref{eq:cauchy_problem_system}
satisfies the estimate $\|\uu (t,\cdot)\|_\infty\le \|\f\|_\infty$, for any $t>s \in I$ and $\f \in C_b(\Rd;\Rm)$; 
\item   if Hypotheses $\ref{uni11}$ are satisfied, then the classical solution of the problem \eqref{prob_approx_dual} satisfies the estimate
$\|\vv(t, \cdot)\|_\infty\le e^{\kappa_0(t-s)}\|\f\|_\infty$, for any $t>s \in I$ and $\f \in C_b(\Rd;\Rm)$.
\end{enumerate}
\end{coro}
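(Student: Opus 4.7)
The plan is to derive each estimate by reducing to Proposition \ref{prop-appendix} with an appropriately chosen function $h$; the two parts differ in what that function is and, for part (ii), in the need to first time-reverse the backward problem.

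For part (i), the strategy is to exploit the pointwise estimate of Proposition \ref{stimapuntLp} in the particular case $p=2$, which is allowed because Hypothesis \ref{uni2}(ii) gives $\beta\geq 1/4$. Applying that proposition yields $|{\bf u}(t,x)|^2\le e^{2H_{1/4,J}(t-s)}(G(t,s)|{\bf f}|^2)(x)$, where $G(t,s)$ is the scalar evolution operator associated with the operator $\A$ in \eqref{defA}. Since $\A$ has no zero-order term and admits the Lyapunov function $\varphi_J$ from Hypothesis \ref{uni2}(iii), the standard scalar maximum principle (as in \cite[Prop.~2.1]{KunLorLun09Non}) yields $\|G(t,s)\psi\|_\infty\le\|\psi\|_\infty$ for any $\psi\in C_b(\Rd)$. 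Taking square roots and passing to the supremum in $x$ gives the claim (with the appropriate multiplicative constant coming from $H_{1/4,[s,T]}$, exactly as in Proposition \ref{recall}). Alternatively, one can verify the hypotheses of Proposition \ref{prop-appendix} directly: writing $B_i=b_iI_m+\tilde B_i$, the diagonal part of $\mathcal{K}_\eta$ cancels and the residual can be estimated by $\mathcal{K}_\eta\geq -dm^2\xi^2\lambda_Q^{2\sigma-1}-4\Lambda_C$, so the choice $h=\Lambda_C+\frac14 dm^2\xi^2\lambda_Q^{2\sigma-1}$ (bounded from above by $H_{1/4,J}$) ensures $\mathcal{K}_\eta+4h\geq 0$.

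For part (ii), the plan is to apply Proposition \ref{prop-appendix} to the time-reversed dual problem with $h=\kappa$. Setting $\tilde{\bf v}(\tau,x):={\bf v}(s+t-\tau,x)$ converts the backward Cauchy problem \eqref{prob_approx_dual} into a forward one for the operator $\mathcal{A}^*$, whose drift is $-B_i$ and whose potential is $C-\sum_{k=1}^d D_kB_k$. A direct computation gives
\begin{equation*}
\mathcal{K}^*_\eta=\mathcal{K}_\eta+4\sum_{k=1}^d\langle D_kB_k\eta,\eta\rangle,\qquad \A^*_\eta={\rm div}(QD_x)-\langle b_\eta,D_x\rangle,
\end{equation*}
where the starred quantities are the analogues of $\mathcal{K}_\eta$ and $\A_\eta$ for $\mathcal{A}^*$. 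With $h=\kappa$ from Hypotheses \ref{uni11}, one then reads off $\mathcal{K}^*_\eta+4\kappa=\tilde{\mathcal{K}}_\eta\geq 0$ by \ref{uni11}(i) and $\A^*_\eta+2\kappa=\tilde{\A}_\eta$, whose Lyapunov function $\varphi_J$ is provided by \ref{uni11}(ii). Thus Proposition \ref{prop-appendix} applies to $\tilde{\bf v}$ with $h_0=\kappa_0$, giving $\|\tilde{\bf v}(\tau,\cdot)\|_\infty\leq e^{\kappa_0(\tau-s)}\|\f\|_\infty$ and, after reverting the change of variable, the desired estimate on $\|{\bf v}(t,\cdot)\|_\infty$.

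The main obstacle is the bookkeeping in part (ii): one must carefully check that the maximum-argument structure in the proof of Proposition \ref{prop-appendix} survives the time reversal, i.e., that the monotonicity in the time variable is correctly inverted so that the sup in the auxiliary function $v_n$ is attained either at the initial time $\tau=s$ or else at an interior point where the differential inequality can be contradicted. Once this is handled, the identifications $\mathcal{K}^*_\eta+4\kappa=\tilde{\mathcal{K}}_\eta$ and $\A^*_\eta+2\kappa=\tilde{\A}_\eta$ are algebraic and the conclusion is immediate.
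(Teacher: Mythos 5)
Your proof of part (ii) is essentially correct and in line with the intended use of Proposition \ref{prop-appendix}: after time reversal the operator $\boldsymbol{\mathcal A}^*$ has drift $-B_i$ and potential $C-\sum_{k=1}^d D_kB_k$, and the quantities $\mathcal{K}_\eta$, $\A_\eta$ computed for $\boldsymbol{\mathcal A}^*$ are precisely $\mathcal{K}_\eta+4\sum_{k=1}^d\langle D_kB_k\eta,\eta\rangle$ and ${\rm div}(QD_x)-\langle b_\eta,D_x\rangle$, so that Hypotheses \ref{uni11} deliver exactly the conditions of Proposition \ref{prop-appendix} with $h=\kappa$. Note, though, that \eqref{prob_approx_dual} is posed on $B_n$ with Dirichlet data, so the Lyapunov function is in fact superfluous and the maximum-principle argument from the proof of Proposition \ref{prop-appendix} applies in its simpler bounded-domain form.

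For part (i) there are two issues you should address. First, your primary route (Proposition \ref{stimapuntLp} with $p=2$ together with $\|G(t,s)\psi\|_\infty\le\|\psi\|_\infty$) gives $\|\uu(t,\cdot)\|_\infty\le e^{H_{1/4,[s,T]}(t-s)}\|\f\|_\infty$, which is the estimate of Proposition \ref{recall}, not the constant-$1$ bound asserted in the Corollary. You parenthetically admit the extra multiplicative constant, but the Corollary claims there is none; if $H_{1/4,J}>0$ this factor cannot be removed by your argument, and the mismatch should be flagged explicitly rather than glossed over (the statement is likely a misprint: either Hypotheses \ref{uni1} were intended, or the factor $e^{H_{1/4,[s,T]}(t-s)}$ should appear, consistently with Proposition \ref{recall}).

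Second, your ``alternatively'' route has a genuine gap: you verify only the sign condition $\mathcal{K}_\eta+4h\ge0$ and say nothing about the other half of Proposition \ref{prop-appendix}'s hypothesis, namely the existence of a Lyapunov function for $\A_\eta+2h$ uniformly in $\eta\in\partial B_1$. Hypotheses \ref{uni2}(iii) provides a Lyapunov function only for the single operator $\A$ built from the fixed decomposition $b$, not for the $\eta$-dependent family $\A_\eta$; as the paper itself points out in Remark \ref{hypocomparison}, this is precisely the respect in which \ref{uni1}(ii) is strictly stronger than \ref{uni2}(iii). Hence Proposition \ref{prop-appendix} cannot simply be invoked under Hypotheses \ref{uni2}, and the route through Proposition \ref{stimapuntLp} is the one to keep.
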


\end{document}